\newtheorem{conj}{Conjecture}
\newtheorem*{cor*}{Corollary}
\newtheorem*{definition*}{Definition}
\newtheorem{remark}{Remark}
\DeclareMathOperator{\rank}{rank}
\title{Heuristics for Exact Nonnegative Matrix Factorization}
\date{} 
\author{Arnaud Vandaele\thanks{Department of Mathematics and Operational Research, 
Facult\'e Polytechnique, Universit\'e de Mons, Rue de Houdain~9, 7000 Mons, Belgium. 
Emails: 
 \texttt{\{arnaud.vandaele, nicolas.gillis, daniel.tuyttens\}@umons.ac.be.}} \and Nicolas Gillis$^*$  
\and Fran\c{c}ois Glineur\thanks{Universit\'e catholique de Louvain, CORE and ICTEAM Institute, %Voie du Roman Pays 34, 
B-1348 Louvain-la-Neuve, Belgium; \texttt{francois.glineur@uclouvain.be}.} \and  Daniel Tuyttens$^*$  
}
\begin{document}

\maketitle

\begin{abstract} 
The exact nonnegative matrix factorization (exact NMF) problem is the following: 
given an $m$-by-$n$ nonnegative matrix $X$ and a factorization rank $r$, find, if possible, an $m$-by-$r$ nonnegative matrix $W$ and an $r$-by-$n$ nonnegative matrix $H$ such that $X = WH$. 
In this paper, we propose two heuristics for exact NMF, one inspired from simulated annealing and the other from the greedy randomized adaptive search procedure. 
We show that these two heuristics are able to compute exact nonnegative factorizations for several classes of nonnegative matrices (namely, linear Euclidean distance matrices, slack matrices, unique-disjointness matrices, and randomly generated matrices) and as such demonstrate their superiority over standard multi-start strategies. 
We also consider a hybridization between these two heuristics that allows us to combine the advantages of both methods. 
Finally, we discuss the use of these heuristics to gain insight on the behavior of the nonnegative rank, i.e., the minimum factorization rank such that an exact NMF exists. In particular, we disprove a conjecture on the nonnegative rank of a Kronecker product, propose a new upper bound on the extension complexity of generic $n$-gons and conjecture the exact value of (i) the extension complexity of regular $n$-gons and (ii) the nonnegative rank of a submatrix of the slack matrix of the correlation polytope. 
\end{abstract} 

\textbf{Keywords.} nonnegative matrix factorization, exact nonnegative matrix factorization, heuristics, simulated annealing, hybridization, nonnegative rank,  linear Euclidean distance matrices, slack matrices, extension complexity.

\section{Introduction} \label{intro}

%\subsection{Exact Nonnegative Matrix Factorization and Nonnegative Rank} 

Nonnegative matrix factorization (NMF) is the problem of finding good approximations of a given nonnegative matrix as a  low-rank product of two nonnegative matrices. This linear dimensionality reduction technique has been used very successfully for a large variety of machine learning and data mining tasks, including text mining and image processing \cite{LS99}. Formally, given a nonnegative matrix $X \in \mathbb{R}^{m \times n}_+$ and a factorization rank $r$, NMF looks for two nonnegative matrices 
$W \in \mathbb{R}^{m \times r}_+$ and 
$H \in \mathbb{R}^{r \times n}_+$ such that $X \approx WH$. Despite the fact that NMF is NP-hard in general~\cite{V09}, it has been used successfully in many practical situations. A large number of dedicated nonlinear local optimization schemes have been developed to compute good factorizations \cite{cichocki2009nonnegative}, e.g., to try identifying good local minima of the following nonconvex optimization problem 
\[
\min_{W \in \mathbb{R}^{m \times r}, H \in \mathbb{R}^{r \times n}} ||X - WH||_F^2 \quad \text{ such that } W \geq 0 \text{ and } H \geq 0 . 
\]
Nearly all NMF algorithms are iterative: at each step, they aim to improve the current solution. In practice, these algorithms are usually initialized randomly, or with some ad hoc strategies; see, e.g., \cite{CAZP09, G14} and the references therein.  

Comparatively, much less attention has been given in the literature to the development of heuristic algorithms aimed at finding better local minima of the NMF approximation problem. 
In this paper, we tackle the problem of computing high quality local minima for the NMF problem. In particular, our focus is on finding exact nonnegative factorizations, that is, computing nonnegative factors $W$ and $H$ such that $X = WH$ holds exactly, a problem we will refer to as \emph{exact NMF}. 
The minimum factorization rank for which such an exact NMF exists is called the \emph{nonnegative rank} of $X$ and is denoted $\rank_+(X)$ \cite{CR93}. 

\subsection{Motivating Applications}

For machine learning and data mining applications, it typically does not make sense to look for exact NMF's because the data is usually contaminated with noise. 
However, several other applications are closely related to exact NMF and the nonnegative rank, including the following. 
\begin{itemize}

\item \emph{Computing the minimum biclique cover number of a bipartite graph}. 
Let $G = (V = V_1 \cup V_2 ,E)$ be a bipartite graph with $V_1 = \{s_1,s_2,\dots,s_m\}$, $V_2 = \{t_1,t_2,\dots,t_n\}$ and $E \subseteq V_1 \times V_2$. 
A complete bipartite subgraph of $G$, referred to as a biclique, 
is a subgraph 
$G' = ( V'_1 \cup V'_2 , E')$ with $V'_1 \subseteq V_1$, $V'_2 \subseteq V_2$ and $E' \subseteq E$ such that $E' = V'_1 \times V'_2$, 
that is, all vertices in $V'_1$ and $V'_2$ are connected. 
The minimum biclique cover number bc$(G)$ of $G$ is the minimum number of bicliques needed to cover all edges in $G$. 
Let $X_G \in \{0,1\}^{m \times n}$ be the biadjacency matrix of the graph, that is, $X_G(i,j) = 1$ for all pairs $(i,j)$ such that $(s_i, t_j) \in E$. A biclique of $G$ corresponds to a nonzero combinatorial rectangle in the biadjacency matrix $X_G$. 
%Hence, the minimum biclique cover number bc$(G)$ of $G$ is equal to the minimum number of 
Given an exact NMF of the biadjacency matrix $X_G = WH = \sum_k W(:,k)H(k,:)$, the nonzero pattern of each rank-one factor $W(:,k)H(k,:)$ must correspond to a biclique. In fact, because $W$ and $H$ are nonnegative, $X_G(i,j) = 0 \Rightarrow W(i,k)H(k,j) = 0$ for all $k$. 
Moreover, the union of the bicliques corresponding to the rank-one factors must cover $G$ since $X_G = WH$. 
Therefore, 
\[ 
\text{bc}(G) \leq \rank_+(X_G), 
\]
and computing an exact NMF of $X_G$ provides an upper bound for the minimum biclique cover of $G$. 
Conversely, a minimum biclique cover of $G$ provides a lower bound for the nonnegative rank of $X_G$, which is referred to as the \textit{rectangle covering bound} and is denoted $\text{rc}(X_G) = \text{bc}(G)  \leq \rank_+(X_G)$; see \cite{FK11} and the references therein.

\item \emph{Computing the extension complexity of polyhedra}. Given a polytope $\mathcal{P}$, an extension  (or lift, or extended formulation) of $\mathcal{P}$ is a higher-dimensional polytope $\mathcal{Q}$ for which there exists a linear projection $\pi$ such that $\pi(\mathcal{Q}) = \mathcal{P}$. If the number of facets of $\mathcal{P}$ is large (possibly growing exponentially with the dimension $k$), a crucial question in combinatorial optimization is whether there exists an extension with a small number of facets (ideally bounded by a polynomial in the dimension $k$), in which case a linear program 
over $\mathcal{P}$ can solved much more effectively using an equivalent formulation over $\mathcal{Q}$.  
The minimum number of facets appearing in any extension of $\mathcal{P}$ is called the extension complexity of $\mathcal{P}$. 
In \cite{Y91}, Yannakakis proved that the extension complexity of a polytope $\mathcal{P}$ is equal to the nonnegative rank of its slack matrix $S_{\mathcal{P}}$ (see Section 2 for a definition of the slack matrix of a polytope). It is also worth mentioning that any exact NMF of $S_{\mathcal{P}}$ provides an explicit extension for $\mathcal{P}$. 
This result has been extensively used recently to prove bounds on the extension complexity of well-known polytopes; 
see \cite{CCCZ09, K11, GPT13} and the references therein.  
For example, it was shown very recently that the perfect matching polytope admits no polynomial-size extension \cite{Ro14}, answering a  long-standing open question in combinatorial optimization.  

\item \emph{Conjecturing new theoretical results on the nonnegative rank, or disproving them}. 
As any exact NMF of a nonnegative matrix provides an upper bound on its nonnegative rank, one can use this technique to find counter-examples to conjectures dealing with the nonnegative rank of matrices, or strengthen our belief that some conjectures are correct. For example, Beasly and Laffey \cite{BL09} developed some lower bounding techniques for the nonnegative rank of $n$-by-$n$ linear Euclidean distance matrices %, defined as $M(i,j) = (a(i)-a(j))^2$ where $a \in \mathbb{R}^n$ 
(see Section~\ref{msnmfalgo} for more details) and conjectured that these rank-three matrices have nonnegative rank $n$. Using a standard NMF algorithm combined with a simple multi-start heuristic, Gillis and Glineur~\cite{GG10b} found a counterexample: 
the 6-by-6 linear Euclidean distance matrix $M(i,j) = (i-j)^2$ ($1 \leq i,j \leq 6$) has nonnegative rank five, which disproved the conjecture and motivated the development of stronger lower bounds for the nonnegative rank of such matrices.  Along the same line, Hrube{\v{s}}~\cite{H12} developed some new upper bounding techniques for the nonnegative rank of such matrices. 
In Section~\ref{conject}, we discuss several examples where the use of exact NMF algorithms allows us to  %to propose or disprove conjectures 
gain insight on the nonnegative rank.  
\end{itemize} 
Other problems closely related to nonnegative rank and exact NMF computations arise in 
communication complexity \cite{Lee09},  
probability \cite{CR10} and 
computational geometry \cite{GG10b}; see also, e.g., \cite{G14} and the references therein.

\subsection{Computational Complexity} \label{ccomp}

%It is important to out that, recently, exact NMF was show to be solvable in polynomial time in $m$ and $n$. 

Given an $m$-by-$n$ nonnegative matrix $X$, Vavasis~\cite{V09} proved that checking whether $\rank(X) = \rank_+(X)$ is NP-hard. Therefore, unless $P=NP$, no algorithm  can decide whether $\rank(X) = \rank_+(X)$ using a number of arithmetic operations bounded by a polynomial in $m$, $n$ and $\rank(X)$. 
Nevertheless, Arora et al.~\cite{AGKM11} showed that checking whether a nonnegative matrix admits an exact rank-$r$ NMF can be done in time polynomial in $m$ and $n$ (i.e., considering the factorization rank $r$ fixed). This result relies on a clever reformulation of the exact NMF problem for an $m \times n$ matrix as a system of $\mathcal{O}(mn)$ fixed-degree polynomial equalities involving $\mathcal{O}(r^2 2^r)$ variables. This, combined with the fact that a system of $k$ polynomial inequalities up to degree $d$ and in $p$ variables can be solved in $\mathcal{O}( (kd)^p )$ operations, shows that checking the existence of an exact rank-$r$ NMF can be done with total complexity $\mathcal{O}( (mn)^{r^2 2^r})$. 

This complexity was later improved by Moitra \cite{Moit13} to $\mathcal{O}( (mn)^{r^2} )$. Unfortunately, because they rely on quantifier elimination, these results do not translate in practical algorithms, even when dealing with very small matrices. For example, 
we were unable to compute a  rank-three NMF of a 4-by-4 matrix (which is actually the first non-trivial case since $\rank(X) = 2 \Leftrightarrow \rank_+(X) = 2$ \cite{Tho, CR93}) using either the built-in polynomial equation solver of Mathematica (which runs out of memory after performing a large number of operations) or the qepcad software \cite{brown2003qepcad} dedicated to quantifier elimination. 

It is therefore not clear whether these theory-oriented complexity results can prove useful to perform  exact NMF, even for small-scale matrices, which prompted us to introduce the use of heuristics to tackle  the problem.

\subsection{Contribution and Outline of the Paper} 

%In this paper, we develop two heuristics to deal with the exact NMF problem: one is inspired from simulated annealing (SA), the other from a greedy randomized adaptive search procedure, which we refer to as the rank-by-rank heuristic (RBR). It turns out that both SA and RBR performs well for exact NMF, especially when compared with the simple multi-start strategies that were previously considered in the literature. For some of the  matrices we tested, the latter were not able not find a single exact NMF after thousands of initializations, while our RBR and SA heuristics would identify one for almost all runs. We also propose a hybridization strategy between SA and RBR combining the advantages of both methods. 

The paper is organized as follows. 
Section~\ref{exactNMFds} lists the classes of nonnegative matrices %for which it is important to compute exact NMF's and 
on which we benchmark exact NMF algorithms, and provides a description of the corresponding applications.   
Section~\ref{msnmfalgo} presents two multi-start strategies, and compares their combination with several initializations strategies and state-of-the-art NMF algorithms. This allows us to select an NMF algorithm (that is, a method to locally improve a current solution) and initialization strategies for the rest of the paper. 
 Section~\ref{sagrasp} introduces two heuristics dedicated to exact NMF (SA and RBR) along with a hybridization. 
Section~\ref{ne} compares these heuristics, showing that they outperform multi-start strategies. In particular, RBR performs remarkably well and is able to identify exact NMF's very efficiently for several classes of matrices, 
while SA and the hybridization strategy are able to compute an exact NMF for all considered matrices. 
Section~\ref{ne} also discusses the limitations of these approaches, which are unable to compute exact NMF's for large and difficult matrices (as expected by the computational complexity of the problem). 
Finally, Section~\ref{conject} discusses the use of these heuristics to better understand the nonnegative rank. In particular, we propose new conjectures for the nonnegative rank of 
(i) the Kronecker product of two nonnegative matrices,  
(ii) the slack matrices of regular and generic $n$-gons, and 
(iii) a submatrix of the slack matrix of the correlation polytope.  \\
%Finally, we show in Section~\ref{realworld} how RBR can also be used to compute approximate NMF's which we illustrate on two real-world applications, namely, hyperspectral unmixing and text mining. \\

To summarize, the main contributions of the paper are threefold: 
\begin{itemize}

\item Design of two heuristics for exact NMF, along with a hybridization strategy, that outperform multi-start strategies. 
To the best of our knowledge, the only heuristic algorithms previously designed for NMF were developed in \cite{JT11a, JT11b, JT11c} and focused only on the (approximate) NMF problem, and not its exact counterpart. 

\item Comparison of these heuristics with two simple multi-start strategies on several classes of nonnegative matrices for which exact factorizations are relevant for applications. This is to the best of our knowledge the first time  exact NMF algorithms are benchmarked on this type of nonnegative matrices (previous work focused on randomly generated matrices, or on machine learning data sets for which exactness of the factorization is not relevant). 

\item Several examples of the concrete use of these heuristics to address open theoretical questions related to the nonnegative rank.
\end{itemize}

The code and data sets used in the paper have been made available online at 
\begin{center}
\url{https://sites.google.com/site/exactnmf} 
\end{center}
We hope that the promising results showed by the methods introduced in this paper will motivate researchers to further develop even faster and more effective heuristics for exact NMF. 

%We would like to point out that the motivation of this paper is not so much to develop the best possible heuristic for exact NMF (in fact, we only present two very simple heuristics) but rather to show that even simple heuristics can improve significantly the search for exact NMF's. Also, our hope is that it will motivate more people to work on this area and develop fast and effective heuristics for exact NMF... a paufiner.  

\section{Benchmark Nonnegative Matrices for Exact NMF} \label{exactNMFds}
 
Throughout the paper, we will compare exact NMF algorithms on the following nonnegative matrices (see Table~\ref{datasets}): 
\begin{itemize}
\item \emph{Linear Euclidean Distance Matrices}. 
Given a set of real numbers $a_i$ for $1 \leq i \leq n$, a linear Euclidean distance matrices (EDM) is defined as 
\[
X_a(i,j) = (a_i-a_j)^2 \quad \text{ for } 1 \leq i \leq n \text{ and } 1 \leq j \leq n. 
\]
If at least three entries of $a \in \mathbb{R}^n$ are distinct, then $\rank(M) = 3$. 
However, the nonnegative rank of linear EDM's can be arbitrarily large: in fact, it was proved in \cite{BL09} that, if the entries of $a$ are distinct, 
\begin{align*}
\rank_+(X_a)  
& \quad \geq \quad \min \left\{ k \; \Big| \; \binom{k}{\lfloor k/2 \rfloor} \geq n  \right\}  \quad \geq \quad \log_2(n). 
\end{align*}
The lower bound was later improved in \cite{GG10b}. 
A subclass of linear EDM's are the following 
\[
X_{[n]}(i,j) = (i-j)^2 \quad \text{ for } 1 \leq i \leq n \text{ and } 1 \leq j \leq n,  
\]
for which it was proved in \cite[Th.1]{H12} that 
\begin{equation*}
\rank_+(X_{[2n]}) \leq \rank_+(X_{[n]}) + 2. 
\end{equation*}
If $n$ is a power of two, we therefore have $\rank_+(X_{[n]}) \leq 2 \log_2(n)$ since $\rank_+(X_{[2]}) = 2$. 
Combining this upper bound with the lower bound from \cite{GG10b} allows us to determine the nonnegative rank for these matrices up to $n = 16$; see Table~\ref{datasets}. However, as we will see later on, it is non-trivial to compute exact NMF for these matrices.

\item \emph{Slack Matrices}. 
%Given a polytope $\mathcal{P}$, it is often useful to compute an extension (or extended/compact formulation) for $\mathcal{P}$. 
%An extension is a higher dimension polytope  $\mathcal{Q}$ that projects onto $\mathcal{P}$, 
%that is, it is a polytope $\mathcal{Q}$ such that there exists a projection $\pi$ with $\mathcal{P} = \pi\left(\mathcal{Q}\right)$. 
%In combinatorial optimization, many problems can be formulated as the minimization of a linear function over a polytope $\mathcal{P}$. However, %the number of facets needed to characterize the feasible domain $\mathcal{P}$ usually is exponential in the number of variables. Finding small extensions (in particular, extension with a polynomial number of facets) allows to tackle these problems more efficiently. 
%The minimum number of facets needed for such an extension is called the extension complexity of $\mathcal{P}$, and is denoted xp($\mathcal{P}$).  
The slack matrix of a polytope $\mathcal{P}$ with $m$ facets and $n$ vertices is defined as the $m \times n$ nonnegative matrix $S_\mathcal{P}$ whose $(i,j)$th entry $S_\mathcal{P}(i,j)$ is equal to the slack of the $j$th vertex with respect to the $i$th facet. Formally, given the list of $n$ vertices $v_j$ ($1 \le j \le n$) and a facet description of the polytope $\mathcal{P} = \{ x \in \mathbb{R}^k \ | \ A(i,:)x \leq b_i \text{ for } 1 \leq i \leq m \}$, we have that 

%$S_\mathcal{P}(i,j) = b_i - A(i,:)v_j \geq 0$. 
%Given the facet description of a polytope $\mathcal{P} = \{ x \in \mathbb{R}^k \ | \ Ax \geq b \}$,  where $A \in \mathbb{R}^{m \times k}$ and $b \in \mathbb{R}^m$ correspond to $m$ facets, 
%and the list of its vertices $v_i \in \mathbb{R}^k$ for $1 \leq i \leq n$, the slack matrix of $\mathcal{P}$ is defined as follows
\[
S_{\mathcal{P}}(i,j) = A(i,:) v_j - b(i) \geq 0 \text{ for } 1 \leq i \leq m \text{ and } 1 \leq j \leq n. 
\]
As recalled in the Introduction, the nonnegative rank of the slack matrix of $\mathcal{P}$ is equal to the extension complexity of $\mathcal{P}$.  In this paper, we use slack matrices of several  well-known classes of polytopes; see Table~\ref{datasets}.

\item \emph{Unique-Disjointness Matrices}. A unique-disjointness (UDISJ) matrix $X_n \in \{0,1\}^{2^n \times 2^n}$ of order $n$ is a matrix whose rows and columns are indexed by all vectors in $a, b \in \{0,1\}^n$ and which satisfies 
\[
X_n(a,b) 
= \left\{ 
\begin{array}{cc}
1 & \text{ if } a^T b = 0, \\
0 & \text{ if } a^T b = 1,\\ 
? & \text{ otherwise},\\ 
\end{array}
\right. 
\]
where $?$ means that the corresponding entry can take any (nonnegative) value.  UDISJ matrices have been successfully used to prove lower bounds on the extension complexity of polytopes, because their sparsity pattern can be found in submatrices of several interesting slack matrices; see, e.g., \cite{KW13} and the references therein. 
Note that UDISJ matrices also often appear in the communication complexity literature\footnote{Bob is given $a$, Alice $b$, 
and they have to decide whether $a^Tb \neq 0$ while minimizing the number of bits exchanged; see \cite{Lee09} for more details.}. 
Many of the best lower bounds for UDISJ matrices are based on the rectangle covering bound (see Section~\ref{intro}), and the class of matrices we consider here is built on a similar principle (we choose not to use the UDISJ matrices themselves as their nonnegative rank is not known exactly).  

Given $r$ rank-one binary (combinatorial) rectangles $w_k h_k^T \in \{0,1\}^{2^n \times 2^n}$ ($1 \leq k \leq r$) covering $X_n$, we define 
\[
Y_n = \sum_{k=1}^r w_k h_k^T \; \in \;  \{0,1,\dots,r\}^{2^n \times 2^n}. 
\] 
Matrix $Y_n$ features the same sparsity pattern as $X_n$ (that is, $Y_n(i,j) \neq 0 \Leftrightarrow X_n(i,j) \neq 0$ for all $i,j$). We can verify that $\rank(Y_n)=r$ and since these matrices clearly admit a rank-$r$ NMF, we can conclude that $\rank_+(Y_n) = r$, and we will use those matrices $Y_n$ for our benchmark; see Table~\ref{datasets}.

\begin{table}[ht] 
\begin{center}
\caption{Nonnegative matrices used to compare exact NMF heuristics.} 
\begin{tabular}{|c||c|c|c|c|c|}
\hline
					& $m$  &   $n$ & $\rank(X)$   & $\rank_+(X)$ & Abreviation  \\ 
\hline  
                   &	 6   & 6  & 3& 5 & LEDM6\\ 
			 Linear EDM's							 &	 8   & 8  &3 &6 & LEDM8  \\ 
						 $X(i,j) = (i-j)^2$, 				&	   12 & 12  & 3 &  7 & LEDM12\\ 
						for $1 \leq i \leq m$, 	$1 \leq j \leq n$			&	  16  & 16  &3 &8  & LEDM16 \\ 
										&	 32   & 32  &3 & 10$^*$  & LEDM32 \\ 
\hline  
Slack Matrix of the Hexagon      &	6  & 6  & 3 & 5  & 6-G \\ 
Slack Matrix of the Heptagon     &	7  & 7  & 3 & 6  & 7-G\\ 
Slack Matrix of the Octagon      &	8  & 8  & 3 & 6  & 8-G\\ 
Slack Matrix of the Nonagon      &	9  & 9  & 3 & 7  & 9-G\\ 
Slack Matrix of the Hexadecagon  &	16 & 16 & 3 & 8  & 12-G\\ 
Slack Matrix of the 32-gon        &	32 & 32 & 3 & 10 & 32-G\\ 
Slack Matrix of the dodecahedron &	20 & 12 & 4 & 9  & 20-D\\ 
Slack Matrix of the 24-cell      &	24 & 24 & 5 & 12$^*$ & 24-C \\ 
\hline  
UDISJ ($n = 4$) &	 16   & 16  & 9  & 9   & UDISJ4 \\ 
UDISJ ($n = 5$) &	 32   & 32  & 18  & 18 & UDISJ5 \\ 
UDISJ ($n = 6$) &	 64   & 64  & 27 &  27 & UDISJ6 \\ 
\hline 
 Randomly generated Matrices: $X = WH$ &	    &   &  &   & \\     
\texttt{density = 0.1}  &	  50  &  50 & 10 & 10 & RND1 \\    
 \texttt{density = 0.3}  &	50    &  50 & 10 & 10 & RND3 \\  \hline  
\end{tabular}
\label{datasets} 
\end{center} 
\vspace{-0.2cm} 
\footnotesize The symbol $^*$ means that the exact value of the nonnegative rank is still unknown, i.e.,  the best known lower bound does not match the best known upper bound. (For LEDM 32, the best lower bound is 9 while for 24-C it is 10.) 
However, after running our heuristics extensively %for countless hours 
on these matrices, we %strongly 
believe that all values of the nonnegative ranks appearing in this table are correct. 
\end{table}

\item \emph{Randomly Generated Matrices}.  It is standard in the NMF literature to use randomly generated matrices to compare  algorithms (see, e.g., \cite{KP11}), with the nice feature that the resulting nonnegative rank of these matrices can be specified. For example, generating each entry of $W \in \mathbb{R}^{m \times r}$ and $H \in \mathbb{R}^{r \times n}$ 
uniformly at random in the interval [0,1] and computing $X = WH$ generates, with probability one, a nonnegative matrix $X$ such that $\rank(X) = \rank_+(X) = r$. In this paper, we have generated such matrices of dimensions 50-by-50 with nonnegative rank 10. 
More precisely,  matrix $W$ is generated as follows: 
\begin{itemize}
\item[(i)] generate $W$ such that each column of the 50-by-10 matrix $W$ has exactly one non-zero entry whose location is randomly chosen and its value is picked uniformly at random in the interval [0,1] (this ensures each rank-one factor to be non-zero), and 
\item[(ii)]  add a sparse uniformly distributed (in the interval [0,1]) random update to $W$, with prespecified density $d$ (i.e. apply \texttt{W = W + sprand(50,10,d)})
\end{itemize}
We use $d = 0.1$ and $0.3$ as specified in Table~\ref{datasets}. Matrix $H$ is generated in the same way.  
It turns out that these nonnegative products $W H$ are relatively easy to factorize: in fact, most initializations lead most NMF algorithms to an exact NMF.  Hence these matrices are not very useful to compare exact NMF heuristics ; nevertheless we include them in our comparisons to illustrate this fact.  
\end{itemize}

\section{Designing Heuristics: Key Ingredients and Multi-Start Examples} \label{msnmfalgo} %, %Data Sets, NMF Algorithms and Initialization} \label{msnmfalgo} 

Before presenting our proposed heuristics, we explore two multi-start strategies (Section~\ref{msstrategies}). This allows us to discuss some key aspects for comparing and designing such heuristics. 
There are four main building blocks for our proposed heuristics:  
\begin{itemize}
\item the initialization strategy (Section~\ref{rdninit}), 

\item the main algorithm, i.e. the heuristic constructing exact NMF's after applying the initialization strategy, which relies on a local NMF algorithm (Section~\ref{msstrategies} for the multi-start strategies, and Section~\ref{sagrasp} for our two proposed heuristics),  

\item the NMF algorithm used to improve solutions locally (Section~\ref{selectNMFalgo}), 
 
\item a final refinement step that will try to further improve the output of the main algorithm as far as possible (ideally, until an exact NMF is found); see the description Algorithm~\ref{alg:iterations}, which also relies on the local NMF algorithm.
\end{itemize}

This final refinement procedure will be applied to all solutions generated by the heuristics. In this paper, we use a tolerance for the relative error equal to $10^{-6}$, that is, we will assume that an exact NMF $(W,H)$ of $X$ is found as soon as $\frac{\|X-WH\|_F}{\|X\|_F} \leq 10^{-6}$. 
Algorithm~\ref{alg:iterations} runs a local NMF algorithm as long as the relative error decreases at least by a predefined factor $\alpha$ after every period of  $\Delta t$ seconds, otherwise it stops and returns the current solution. We set the parameters to the following rather conservative values: $\Delta t = 1$ second (which is quite large for small matrices\footnote{For example, for a 50-by-50 matrix and $r= 10$, running standard multiplicative updates for one second allows to perform about 10000 iterations on a standard laptop.}) and $\alpha = 0.99$; see Appendix~\ref{appA} for some additional numerical results comparing different values for $\Delta t$ and $\alpha$.

%From initial $U$ and $V$ matrices,  consists to apply an algorithm for NMF and to compute the error every $\Delta t$ seconds. Iterations are stopped if the error has not decreased by a factor $\alpha$. 
\renewcommand{\thealgorithm}{FR} 
\begin{algorithm}[ht!] 
	\caption{Final Refinement$(X,W,H,\alpha,\Delta t)$}
	\label{alg:iterations}
	\begin{algorithmic}[1]
		\REQUIRE $X \in {\mathbb R}^{m\times n}_+$, $W \in {\mathbb R}^{m\times r}_+$, $H \in {\mathbb R}^{r\times n}_+$, $0<\alpha<1$, $\Delta t$. 
		\medskip 
		
		\STATE $i=1$, $e_0 = +\infty$, $e_1 = {\|X-WH\|_F}/{\|X\|_F}$. 
		\WHILE{$e_{i} < \alpha e_{i-1}$ and $e_{i} > 10^{-6}$}
		  \STATE $i \gets i + 1$. 
			\STATE $[W,H] \gets$ AlgoNMF$(X,W,H,\Delta t)$. \emph{\% See Section~\ref{selectNMFalgo}}
			\STATE $e_i \gets {\|X-WH\|_F}/{\|X\|_F}$.   
		\ENDWHILE
		\RETURN $W \in {\mathbb R}^{m\times r}_+$, $H \in {\mathbb R}^{r\times n}_+$, relative error $e_i$. 
	\end{algorithmic}
\end{algorithm}

Figure~\ref{heurbox} shows how these blocks are arranged in our design of heuristics. 
\begin{figure}[ht!] 
\centering 
   \includegraphics[width=10cm]{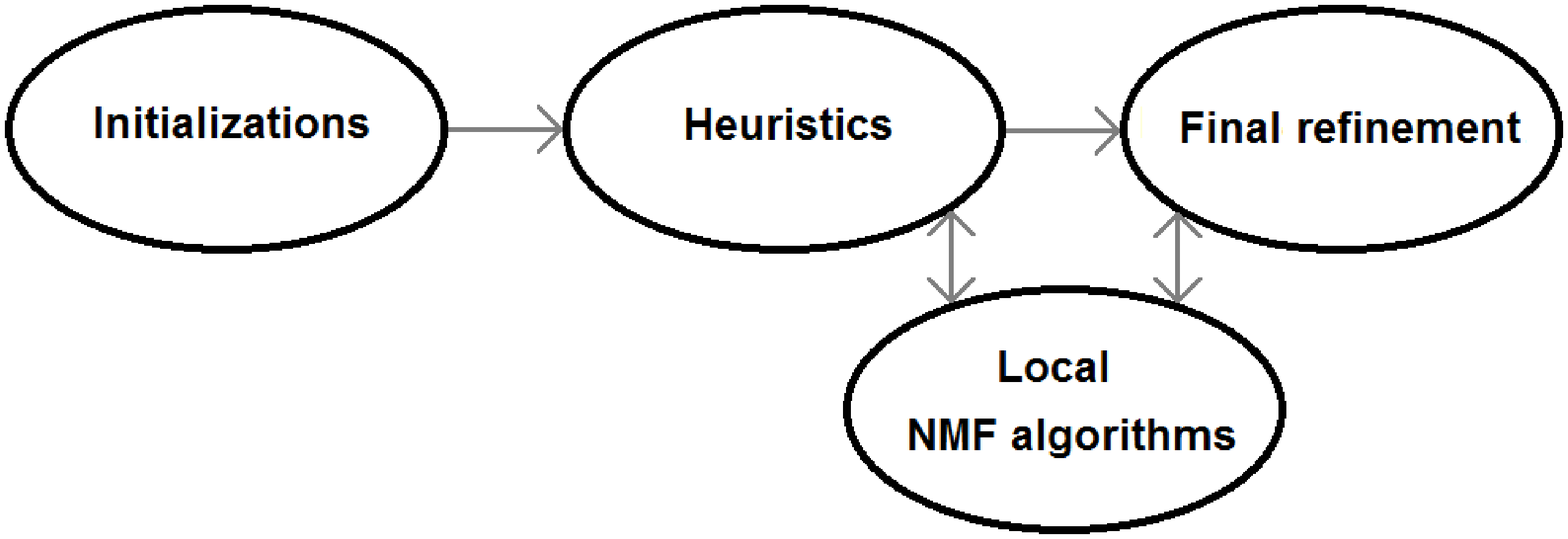} 
\caption{Representation of the stream of the exchange of information between the different building blocks of our exact NMF heuristics. An arrow represent the transfer of a solution.} 
\label{heurbox}
\end{figure} 
Since it will not be practical to display results for all possible combinations of heuristics (there will be five in total), NMF algorithms (five) and initializations (five), along with the different tuning parameters, 
another goal of this section is to select, for the rest of the paper, reasonable values for the parameters of a good multi-start heuristic, along with an efficient local improvement algorithm and initialization strategy that performs well on most examples.

\begin{remark}[Are our exact NMF's really exact?]  \label{trullyexact}
At this point, it is important to insist on the fact that all the numerical experiments performed in this paper are with floating point arithmetic. Hence, we consider a factorization exact if the relative error ${\|X-WH\|_F}/{\|X\|_F}$ is smaller than some threshold (we choose $10^{-6}$) so that the computed factorizations are not exact but high precision solutions. 
It is interesting to point out that all the solutions that we computed with relative error smaller than $10^{-6}$ could be further improved with additional iterations of Algorithm~\ref{alg:iterations} to $10^{-16}$  (which is the smallest possible using the standard Matlab precision). 
Note that it is an open question whether the nonnegative rank over the rationals equals the nonnegative rank over the reals; see, e.g., the discussion in~\cite{V09}. Note that they were recently shown to be different for the semidefinite rank, a generalization of the nonnegative rank~\cite{GFR14}; see \cite{FGP14} for more details.
\end{remark}

\subsection{Two Multi-Start Heuristics} \label{msstrategies}

In this section we propose two multi-start heuristics (Sections~\ref{secms1} and~\ref{secms2}). 
%They consist in repeating the following procedure until an exact NMF is obtained: generate an initial pair $(W,H)$ randomly (see Section~\ref{rdninit}) and apply an NMF algorithm (see Section~\ref{selectNMFalgo}) until convergence. 
%is run during  un
% is the time allotted to  before convergence is checked, 
%and $\alpha$ is  required otherwise it is stopped. We will discuss these parameters in . 

\subsubsection{Multi-Start 1} \label{secms1}

The simplest multi-start strategy one can think of is to restart Algorithm~\ref{alg:iterations} with many different initial matrices until an exact NMF is obtained; see Algorithm~\ref{alg:ms1}. 
Note that this heuristic is the one used in \cite{GG10b} to compute exact NMF of linear EDM's. 
Note also that, in view of Figure~\ref{heurbox}, MS1 corresponds to a heuristic which is an `empty box' that transfers directly the solution from `Initializations' to `Final refinement'. 
\renewcommand{\thealgorithm}{MS1}  
\begin{algorithm}[ht!] 
	\caption{Multi-Start~1$(X,r,\alpha,\Delta t)$}
	\label{alg:ms1}
	\begin{algorithmic}[1]
		\REQUIRE $X \in {\mathbb R}^{m\times n}_+$, $r<\min(m,n)$, $0<\alpha<1$, $\Delta t$, $\text{tol} = 10 ^{-6}$. 
		\medskip 

		\STATE $(W_0,H_0) \gets$ random initialization$(m,n,r)$. \emph{\% See Section~\ref{rdninit}}
		%\STATE $H_0 \gets$ random initization$(r,n)$.
		\STATE $[W,H,e] \gets$ Final Refinement$(X,W_0,H_0,\alpha,\Delta t)$. 
	\end{algorithmic}
\end{algorithm}

\subsubsection{Multi-Start 2}  \label{secms2}

Applying Algorithm~\ref{alg:iterations} until convergence is useless when the error does not converge to zero, 
that is, when $(W,H)$ converges to a local minimum with error strictly larger than zero.  
The idea behind Algorithm~\ref{alg:ms2} is to keep the pairs $(W,H)$ with the best potential to obtaining an exact NMF, and therefore avoiding waste of computational time. The way we proceed is to generate $K$ different random initializations, apply $N$ iterations of an NMF algorithm to each pair 
and only apply Algorithm~\ref{alg:iterations} to the pair $(W,H)$ with the smallest residual error among those. 
%Then Algorithm~\ref{alg:iterations} is only applied on that pair $(W,H)$. 
This heuristic can also be found in \cite{CAZP09}. Moreover, note that \ref{alg:ms1} is a particular case of \ref{alg:ms2} with $K = 1$. 
\renewcommand{\thealgorithm}{MS2} 
\begin{algorithm}[ht!] 
	\caption{Multi-Start~2$(X,r,\alpha,\Delta t,K,N)$}
	\label{alg:ms2}
	\begin{algorithmic}[1]
		\REQUIRE $X \in {\mathbb R}^{m\times n}_+$, $r<\min(m,n)$, $0<\alpha<1$, $\Delta t$, $K$, $N$, $\text{tol} = 10 ^{-6}$. 
		\medskip 
		
		 \STATE $e = 1$. 
		\FOR{$i=1 \to K$}
		  \STATE $(\tilde{W},\tilde{H})  \gets$ random initialization$(m,n,r)$. \emph{\% See Section~\ref{rdninit}} 
		  \STATE $[\tilde{W},\tilde{H}] \gets$ AlgoNMF$(X,\tilde{W},\tilde{H},N)$.  \emph{\% See Section~\ref{selectNMFalgo}} 
			\STATE $\tilde{e} = {\|X- \tilde{W}\tilde{H} \|_F}/{\|X\|_F}$. 
			\IF {$\tilde{e} < e$}
		
				\STATE $(W,H) \gets (\tilde{W},\tilde{H})$.
				\STATE $e \gets \tilde{e}$.
				
			\ENDIF 
					\ENDFOR 
		\STATE $[W,H] \gets$ Final Refinement$(X,W,H,\alpha,\Delta t)$. 

	\end{algorithmic}
\end{algorithm}

\subsubsection{Comparing the Multi-Start Heuristics}

Table~\ref{comparMS} gives the computational results for the two multi-start heuristics with different parameters for \ref{alg:ms2} (namely, $N = 20, 40$ and $K = 100,200$). Throughout the paper (unless stated otherwise), the settings are the following: 
\begin{itemize}

\item We use the same randomly generated initial matrices to obtain a fair comparison between the different runs (and for the results to be reproducible). 
In order to do so, we will control the random number generator of Matlab as follows: 
it is initialized with the value 1 (that is, we execute \texttt{rng(1)}) and after each outer loop of the heuristics 
(for example, after step~2 of \ref{alg:ms1} and \ref{alg:ms2}), it is increased by one (that is, we execute \texttt{rng(i+1)} where $i$ is the number of iterations performed so far). 

\item We perform at most 100 runs of each heuristic. In order to reduce the computational time of the numerical experiments, we stop testing a given heuristic as soon as (at least) five exact NMF's for a given nonnegative matrix have been found (this condition being checked after every ten runs). %.and report the number of initializations needed to obtain these factorizations. 

\item The tables display the number of exact NMF's found out of the number of runs performed (for example, 6/10 means that the algorithm found six exact NMF's out of ten runs). 
They also display in brackets the average time in seconds needed to compute a single exact NMF. 
The best results in terms of average running time are underlined, 
and the best heuristics in term of robustness (i.e. proportion of exact factorizations found) are in bold; 
see the caption of Table~\ref{comparMS} for more details.   
%We underline the best heuristic in terms of average running time to compute a single exact NMF along with any heuristic whose running time to compute an exact NMF is at most 10\% away from the best heuristic.
%We put in bold the best heuristic in term of number of exact NMF's found out of a given number of initializations along with any heuristic which is at most 10\% away from the best heuristic.

%one NMF for each initialization (hence, for example, \ref{alg:ms2} is expected to be slower than \ref{alg:ms1} since it picks one initial pair out $N$ locally improved ones). 

\end{itemize}

We refer the reader to Section~\ref{selectNMFalgo} for the local search NMF algorithm selected and to Section~\ref{rdninit} for the initialization strategy of matrices $W$ and $H$. 
 All tests are preformed using Matlab on a PC Intel CORE i5-4570 CPU @3.2GHz $\times$ 4 7.7Go RAM. \\

We observe in Table~\ref{comparMS}  that \emph{\ref{alg:ms2} performs better} than \ref{alg:ms1}, while the variants of \ref{alg:ms2} perform similarly. 
Note that the computational times are rather similar: the reason is that the considered matrices are rather small and performing $N$ iterations of the NMF algorithm is therefore relatively quick. 
In the remainder of the paper, we will use the parameters $K = 200$ and $N = 20$ for \ref{alg:ms2} as it offers a good compromise between proportion of exact NMF's found and total computational time.

It is interesting to note that 
\begin{itemize}

\item For randomly generated matrices, as already anticipated in Section~\ref{exactNMFds}, all heuristics are able to identify an exact NMF for all runs. 

\item For some linear EDM's (LEDM12 to LEDM32) and slack matrices (16-G to 24-C), no multi-start strategy is able to identify an exact NMF. 
This observation is the main motivation to develop more efficient heuristics for exact NMF:  
we had to run \ref{alg:ms1} for several hours (which means thousands of initializations) to find an exact NMF of 24-C (slack matrix of the 24-cell). 

\end{itemize}

\begin{table}[h]
\begin{center}
\caption{Comparison of the multi-start heuristics. (The ratio $x/y$ means that $x$ exact NMF's have been found out of $y$ runs of the heuristic, while the number in brackets is the average running time for a heuristic to find a single exact NMF. 
Underlined: (i) the best heuristic in terms of average running time to compute a single exact NMF, and (ii) any heuristic whose running time to compute an exact NMF is at most 10\% away from the best heuristic. 
In bold: (i) the best heuristic in terms of number of exact NMF's found out of a given number of runs, and 
(ii) any heuristic which is at most 10\% away from the best heuristic.  )} 
\begin{tabular}{|c||c||c|c|c|c|}
\hline
 & MS1 & MS2(100,20) & MS2(200,20) & MS2(100,40) & MS2(200,40) \\
\hline
LEDM6 & 5/80 (35) & 9/20 (4.7) & 7/10 (\underline{3.2}) & 11/20 (4.4) & \textbf{9/10} (\underline{3.2}) \\
LEDM8 & 0/100 ($\sim$) & \textbf{6/50} (29.2) & \textbf{6/40} (\underline{20}) & 5/50 (35.8) & \textbf{6/40} (37) \\
LEDM12 & 0/100 ($\sim$) & 0/100 ($\sim$) & 0/100 ($\sim$) & 0/100 ($\sim$) & 0/100 ($\sim$) \\
LEDM16 & 0/100 ($\sim$) & 0/100 ($\sim$) & 0/100 ($\sim$) & 0/100 ($\sim$) & 0/100 ($\sim$) \\
LEDM32 & 0/100 ($\sim$) & 0/100 ($\sim$) & 0/100 ($\sim$) & 0/100 ($\sim$) & 0/100 ($\sim$) \\
6-G & 8/30 (6.6) & \textbf{10/10} (\underline{1.6}) & \textbf{10/10} (1.8) & \textbf{10/10} (1.9) & \textbf{10/10} (2.9) \\
7-G & 8/20 (4.1) & \textbf{9/10} (\underline{1.9}) & \textbf{10/10} (2.2) & \textbf{9/10} (2.3) & \textbf{10/10} (3) \\
8-G & 5/60 (23.3) & 6/10 (3.2) & \textbf{9/10} (\underline{2.3}) & \textbf{9/10} (\underline{2.3}) & \textbf{10/10} (3) \\
9-G & 7/40 (10.6) & 7/10 (\underline{2.8}) & 6/10 (4.2) & 6/10 (4.1) & \textbf{8/10} (4.2) \\
16-G & 0/100 ($\sim$) & 0/100 ($\sim$) & 0/100 ($\sim$) & 0/100 ($\sim$) & 0/100 ($\sim$) \\
32-G & 0/100 ($\sim$) & 0/100 ($\sim$) & 0/100 ($\sim$) & 0/100 ($\sim$) & 0/100 ($\sim$) \\
20-D & 0/100 ($\sim$) & 0/100 ($\sim$) & 0/100 ($\sim$) & 0/100 ($\sim$) & 0/100 ($\sim$) \\
24-C & 0/100 ($\sim$) & 0/100 ($\sim$) & 0/100 ($\sim$) & 0/100 ($\sim$) & 0/100 ($\sim$) \\
UDISJ4 & 6/10 (2.4) & \textbf{10/10} (\underline{1.8}) & \textbf{10/10} (2.1) & \textbf{10/10} (2.2) & \textbf{10/10} (3.4) \\
UDISJ5 & 5/30 (\underline{12.2}) & 5/30 (30.1) & 5/30 (36.4) & \textbf{9/20} (14.9) & \textbf{5/10} (24.2) \\
UDISJ6 & \textbf{2/100} (\underline{119.5}) & 0/100 ($\sim$) & 0/100 ($\sim$) & \textbf{1/100} (1211.9) & 0/100 ($\sim$) \\
RND1 & \textbf{10/10} (\underline{1.1}) & \textbf{10/10} (1.9) & \textbf{10/10} (2.3) & \textbf{10/10} (2.6) & \textbf{10/10} (4.1) \\
RND3 & \textbf{10/10} (\underline{1.1}) & \textbf{10/10} (2) & \textbf{10/10} (2.4) & \textbf{10/10} (2.5) & \textbf{10/10} (4) \\
\hline
\end{tabular}
\label{comparMS}
\end{center}
\end{table}

\subsection{Selecting an Initialization Strategy} \label{rdninit} 

In this section, we describe several random initialization strategies. The most widely used strategy is to generate each entry of the initial $W$ and $H$ factors uniformly at random in the interval [0,1], a strategy which we refer to as RNDCUBE. 
As we will see, RNDCUBE performs rather poorly, and we propose a new very effective random initialization strategy which allows to explore the search domain in a much better way. In fact, the issue with generating each entry of $W$ and $H$ uniformly at random in the interval [0,1] is that it only generates dense matrices, while it is well-known that 
\begin{itemize}
\item [(i)] exact NMF solutions usually have many zero entries (see, e.g., the discussion in \cite{GG09}), and 
\item [(ii)] the boundary of the feasible domain only contains sparse matrices ; hence generating only dense initial matrices starts the exploration relatively far away from that boundary where solutions are in general located. 
\end{itemize}

The sparsest possible way to generate initial matrices with nonzero rank-one factors is the following: we generate $W$ and $H$ so that each column or each row has a single non-zero entry (whose position is chosen at random). This leads to four possible initializations denoted SPARSE$ij$: 
$i=0$ (resp.\@ $j= 0$) means that $W$ (resp.\@ $H$) has a single non-zero entry by row, and 
$i=1$ (resp.\@ $j= 1$) means that $W$ (resp.\@ $H$) has a single non-zero entry by column.  

%We also compare to another sparse initialization strategy: each column of $W$ and $H$ are generated following the Dirichlet distribution where all parameters are equal to one another (namely, equal to 0.1 and 0.01), which we denote DIR0.1 and DIR0.01. (Note that if the Dirichlet parameter goes to zero, we obtain the SPARSE11 initialization strategy.) 

Table~\ref{tableinit} reports the numerical results. 
\begin{table}[h]
\footnotesize
\begin{center}
\caption{Comparison of the different initialization strategies combined with multi-start~2.}
\begin{tabular}{|c||c|c|c|c|c|}
\hline
 & sparse 00 & sparse 10 & sparse 01 & sparse 11 & rndcube \\
\hline
LEDM6 & 5/100 (54.8) & 5/90 (50.6) & \textbf{8/10} (\underline{2.7}) & 7/10 (3.2) & 6/60 (25.1) \\
LEDM8 & 4/100 (113.3) & 3/100 (133.5) & \textbf{6/30} (23.5) & \textbf{6/40} (\underline{20}) & 0/100 ($\sim$) \\
LEDM12 & 0/100 ($\sim$) & 0/100 ($\sim$) & 0/100 ($\sim$) & 0/100 ($\sim$) & 0/100 ($\sim$) \\
LEDM16 & 0/100 ($\sim$) & 0/100 ($\sim$) & 0/100 ($\sim$) & 0/100 ($\sim$) & 0/100 ($\sim$) \\
LEDM32 & 0/100 ($\sim$) & 0/100 ($\sim$) & 0/100 ($\sim$) & 0/100 ($\sim$) & 0/100 ($\sim$) \\
6-G & \textbf{10/10} (\underline{1.7}) & \textbf{10/10} (1.9) & \textbf{10/10} (2.1) & \textbf{10/10} (\underline{1.8}) & \textbf{10/10} (2) \\
7-G & \textbf{10/10} (\underline{1.9}) & 7/10 (2.9) & \textbf{10/10} (2.2) & \textbf{10/10} (2.2) & \textbf{10/10} (\underline{1.9}) \\
8-G & 6/10 (4.1) & 6/10 (3.8) & \textbf{9/10} (\underline{2.5}) & \textbf{9/10} (\underline{2.3}) & 5/30 (16.7) \\
9-G & 8/20 (6.3) & 5/30 (16.1) & 5/10 (4.9) & \textbf{6/10} (\underline{4.2}) & 5/40 (23.3) \\
16-G & 0/100 ($\sim$) & 0/100 ($\sim$) & 0/100 ($\sim$) & 0/100 ($\sim$) & 0/100 ($\sim$) \\
32-G & 0/100 ($\sim$) & 0/100 ($\sim$) & 0/100 ($\sim$) & 0/100 ($\sim$) & 0/100 ($\sim$) \\
20-D & 0/100 ($\sim$) & 0/100 ($\sim$) & 0/100 ($\sim$) & 0/100 ($\sim$) & 0/100 ($\sim$) \\
24-C & 0/100 ($\sim$) & 0/100 ($\sim$) & 0/100 ($\sim$) & 0/100 ($\sim$) & 0/100 ($\sim$) \\
UDISJ4 & \textbf{10/10} (\underline{2.2}) & \textbf{10/10} (\underline{2.1}) & \textbf{10/10} (\underline{2.3}) & \textbf{10/10} (\underline{2.1}) & \textbf{10/10} (\underline{2.1}) \\
UDISJ5 & \textbf{7/20} (\underline{17.1}) & 6/20 (20.4) & 6/30 (30.5) & 5/30 (36.4) & \textbf{7/20} (\underline{16.5}) \\
UDISJ6 & \textbf{2/100} (465.9) & \textbf{5/80} (\underline{146.3}) & \textbf{1/100} (921.8) & 0/100 ($\sim$) & 0/100 ($\sim$) \\
RND1 & \textbf{10/10} (\underline{2.3}) & \textbf{10/10} (2.4) & \textbf{10/10} (2.8) & \textbf{10/10} (\underline{2.3}) & \textbf{10/10} (\underline{2.1}) \\
RND3 & \textbf{10/10} (2.5) & \textbf{10/10} (\underline{2.3}) & \textbf{10/10} (\underline{2.2}) & \textbf{10/10} (\underline{2.4}) & \textbf{10/10} (\underline{2.3}) \\
\hline
\end{tabular}
\label{tableinit}
\end{center}
\end{table} 
As explained above, RNDCUBE does not perform as well as the sparse initialization strategies (for example, it is not able to find an exact NMF of LEDM8 while all other initialization strategies are). 
\emph{SPARSE11 has on average the best results} and we will therefore select it as the initialization strategy for \ref{alg:ms2} for the remainder of the paper.

%Finally, we will use Algorithm~\ref{alg:ms2} with 
%$K = 200$, 
%$N = 20$, 
%A-HALS as the NMF algorithm with stopping parameters $\alpha = 0.99$ and $\Delta t = 1$, and 
%SPARSE11 as the initialization strategy. 

\subsection{Selecting an NMF Algorithm} \label{selectNMFalgo}

In order to design heuristics for exact NMF, 
a local search heuristic is needed to improve a given solution (i.e.\@ pair of factors $W$ and $H$) locally. 
Most NMF algorithms could potentially be used: in fact, most NMF algorithms are local search heuristic based on standard nonlinear optimization schemes.  
In this section, we compare the following state-of-the-art NMF algorithms in order to assess their performances for computing exact NMF's: 
\begin{enumerate} 
\item (\textbf{MU}) \, The multiplicative updates (MU) algorithm  of \cite{LS99, LS2}. 
\item (\textbf{A-MU}) \, The accelerated MU from \cite{GG12}.  
\item (\textbf{HALS}) \, The hierarchical alternating least squares (HALS) algorithm from \cite{CZA07, CP09b}.  
\item (\textbf{A-HALS}) \, The accelerated HALS from \cite{GG12}. 
\item (\textbf{ANLS}) \, The alternating nonnegative least squares algorithm of \cite{KP11}, which alternatively optimizes $W$ and $H$ exactly using a  block-pivot active set method; see also \cite{KHP14}. 
\end{enumerate} 
The code of the first four algorithms is available at \url{https://sites.google.com/site/nicolasgillis/}. 
The code of ANLS was obtained from \url{http://www.cc.gatech.edu/~hpark/}.

The convergence speeds of these NMF algorithms were previously compared on real-world image and document data sets, and A-HALS was shown to perform the best in most cases. 
However, in this paper, we are interested in finding exact NMF's of relatively small matrices.  %hence convergence speed in not our focus. 
Our goal in this section is therefore to identify which algorithm is the best at identifying exact NMF's of such matrices when used as a subroutine for \ref{alg:ms2}; see Table~\ref{comparNMFalgo}. 
%As it turns out, 
HALS and A-HALS perform on average the best in terms of number of exact NMF's found (note that A-HALS is not much faster than HALS because the parameter $\Delta t$ was set to a rather large value, hence both algorithms are able to converge within the alloted time). 
%In terms of finding exact NMF, HALS and A-HALS perform the same (except for the UDISJ6 matrix for which A-HALS finds one more exact NMF) while A-HALS is faster. 
ANLS performs rather poorly because it runs into numerical problems for rank-deficient factors $W$ (and/or $H$), 
which appear as solutions of exact NMF's of nonnegative matrices $X$ with $\rank_+(X) > \rank(X)$ \cite{GG10b}. MU and A-MU also perform poorly: because of their multiplicative nature, they cannot deal very well with sparse solutions\footnote{Note that we used the variants of MU and A-MU proposed \cite{GG12} where zero entries of $W$ and $H$ are replaced with a small positive number (we used $10^{-16}$) so that they can modify zero entries, and a subsequence is guaranteed to converge to a stationary point \cite{TR14}.}; 
see, e.g., the discussion in \cite{GG12}.

\begin{table}[h]
\begin{center}
\caption{Comparison of NMF algorithms combined with multi-start~2.}
\begin{tabular}{|c||c|c|c|c|c|}
\hline
 & ANLS & MU & A-MU & HALS & A-HALS \\
\hline
LEDM6 & 0/100 ($\sim$) & 0/100 ($\sim$) & 0/100 ($\sim$) & \textbf{8/10} (\underline{2.8}) & 7/10 (3.2) \\
LEDM8 & 0/100 ($\sim$) & 0/100 ($\sim$) & 0/100 ($\sim$) & \textbf{5/30} (\underline{20.8}) & \textbf{6/40} (\underline{20}) \\
LEDM12 & 0/100 ($\sim$) & 0/100 ($\sim$) & 0/100 ($\sim$) & 0/100 ($\sim$) & 0/100 ($\sim$) \\
LEDM16 & 0/100 ($\sim$) & 0/100 ($\sim$) & 0/100 ($\sim$) & 0/100 ($\sim$) & 0/100 ($\sim$) \\
LEDM32 & 0/100 ($\sim$) & 0/100 ($\sim$) & 0/100 ($\sim$) & 0/100 ($\sim$) & 0/100 ($\sim$) \\
6-G & 0/100 ($\sim$) & 0/100 ($\sim$) & 0/100 ($\sim$) & \textbf{10/10} (2.1) & \textbf{10/10} (\underline{1.8}) \\
7-G & 0/100 ($\sim$) & 0/100 ($\sim$) & 0/100 ($\sim$) & \textbf{10/10} (\underline{2.1}) & \textbf{10/10} (\underline{2.2}) \\
8-G & 0/100 ($\sim$) & 0/100 ($\sim$) & 0/100 ($\sim$) & \textbf{9/10} (\underline{2.4}) & \textbf{9/10} (\underline{2.3}) \\
9-G & 0/100 ($\sim$) & 1/100 (405.3) & 5/70 (134.2) & 5/10 (5.4) & \textbf{6/10} (\underline{4.2}) \\
16-G & 0/100 ($\sim$) & 0/100 ($\sim$) & 0/100 ($\sim$) & 0/100 ($\sim$) & 0/100 ($\sim$) \\
32-G & 0/100 ($\sim$) & 0/100 ($\sim$) & 0/100 ($\sim$) & 0/100 ($\sim$) & 0/100 ($\sim$) \\
20-D & 0/100 ($\sim$) & 0/100 ($\sim$) & 0/100 ($\sim$) & 0/100 ($\sim$) & 0/100 ($\sim$) \\
24-C & 0/100 ($\sim$) & 0/100 ($\sim$) & 0/100 ($\sim$) & 0/100 ($\sim$) & 0/100 ($\sim$) \\
UDISJ4 & \textbf{10/10} (13) & 0/100 ($\sim$) & 0/100 ($\sim$) & \textbf{10/10} (2.4) & \textbf{10/10} (\underline{2.1}) \\
UDISJ5 & 5/100 (778.7) & 0/100 ($\sim$) & 0/100 ($\sim$) & \textbf{5/40} (58.5) & \textbf{5/30} (\underline{36.4}) \\
UDISJ6 & 0/100 ($\sim$) & 0/100 ($\sim$) & 0/100 ($\sim$) & \textbf{1/100} (\underline{1105.4}) & 0/100 ($\sim$) \\
RND1 & 5/20 (71.6) & 3/100 (472.5) & 5/10 (33.2) & \textbf{10/10} (2.8) & \textbf{10/10} (\underline{2.3}) \\
RND3 & \textbf{10/10} (27.5) & 8/20 (161.6) & 7/10 (49.1) & \textbf{10/10} (2.8) & \textbf{10/10} (\underline{2.4}) \\
\hline
\end{tabular}
\label{comparNMFalgo}
\end{center}
\end{table}

%\vspace{1cm}
% As already observed in \cite{GG12}, A-HALS in general outperforms the other tested NMF algorithms.  
%\vspace{1cm}

In light of these results, \emph{we select A-HALS} as the NMF algorithm for the remainder of the paper.

\section{Two Heuristics for NMF} \label{sagrasp}

In this section, we propose two heuristics for NMF, along with a hybridization strategy. 

\subsection{Simulated Annealing} \label{saheur}

The first heuristic we propose follows the widely used simulated annealing framework \cite{Pirlo96}; see Algorithm~\ref{alg:sa} which we briefly describe here. 
As for the multi-start heuristics, \ref{alg:sa} first generates an initial solution $(W,H)$. 
\ref{alg:sa} will then explore the neighborhood of this initial solution in a random fashion in the hope to find a better solution. 
 A solution in the neighborhood will be computed by repeating $K$ times the following steps: 
\begin{itemize}

\item select a small subset $\mathcal{J}$ of $J$ rank-one factors $W(:,\mathcal{J})H(\mathcal{J},:)$ at random, that is, generate randomly $\mathcal{J} \subset \{1,2,\dots,r\}$ such that $|\mathcal{J}| = J$, 

\item reinitialize these rank-one factors randomly (see Section~\ref{rdninit}),  

\item improve the corresponding solution locally (we will use $N$ iterations of A-HALS; see Section~\ref{selectNMFalgo}), and 

\item decide whether to keep the refined neighboring solution depending on its error and on the current temperature, see step~14 of Algorithm~\ref{alg:sa} (the higher the temperature, the more likely it is for a solution to be accepted as the next iterate). Note that a solution whose error is smaller than the error of the current solution is always kept.  Hence an important characteristic of \ref{alg:sa} is that it allows for solutions with higher errors to be explored (although the probability for this to happen goes to zero as the temperature decreases). 

\end{itemize} 

The procedure is repeated several times for several temperatures 
(from $T_0$ to $T_{end}$ with 20 loga-rithmically-spaced intermediate values %using the function \texttt{logspace($T_0$,$T_{end}$,20)} of Matlab
). 
We use the following values for the parameters: initialization SPARSE10, $T_{0} = 0.1$ for the initial temperature (this means for example that the initial temperature allows for a solution with relative error 10\% higher than the current solution to be accepted with probability $e^{-1} \approx 1/3$),  $T_{end} = 10^{-4}$ for the final temperature (this means for example that the final temperature allows for a solution with relative error $0.1\%$ higher than the current solution to be accepted with probability $e^{-1} \approx 1/3$), 
$J=2$, $N = 100$ and $K = 50$; see Appendix~\ref{appB} for numerical experiments for different values of the parameters.  It is important to point out that the initialization procedure is crucial: in fact, SPARSE10 allows to compute exact NMF for all considered matrices while RNDCUBE fails to do so and is much slower to compute exact NMF's; see Appendix~\ref{appB}.  
\renewcommand{\thealgorithm}{SA} 
\begin{algorithm}
	\caption{Simulated Annealing$(X,r,\alpha,\Delta t, T_0, T_{end}, \beta, K, N, J, \text{tol})$}
	\label{alg:sa}
	\begin{algorithmic}[1]
		\REQUIRE $X \in {\mathbb R}^{m\times n}_+$, $r < \min(m,n)$, $0<\alpha<1$, $T_0$, $T_{end}$, $0<\beta<1$, $K$, $N$, $J$, tol. 
		
		\STATE $(W,H) \gets$ random initialization$(m,n,r)$. \emph{\% See Section~\ref{rdninit}} 
		\STATE $e \gets \frac{\|X-WH\|_F}{\|S\|_F}$.
		\STATE $e_{\min} \gets e$.
		\STATE $T \gets T_0$
		\WHILE{$T>T_{end}$}
			\FOR{$i=1 \to K$}
				\STATE $(\tilde{W}, \tilde{H}) \gets ({W}, {H})$. 
				\STATE $\mathcal{J} \gets$ pick randomly $J$ indices in $\{1,2,\dots,r\}$. 
				\STATE $(\tilde{W}(:,\mathcal{J}), \tilde{H}(\mathcal{J},:)) \gets$ random initialization$(m,n,J)$.  
				\STATE $[\tilde{W},\tilde{H}] \gets$ AlgoNMF$(X,\tilde{W},\tilde{H},N)$. 
				\STATE $\tilde{e} \gets \frac{\|X-\tilde{W}\tilde{H}\|_F}{\|X\|_F}$.
				\STATE $\Delta \gets \tilde{e}-e$.
				\STATE  \emph{\% $U[0,1]$ is the uniform distribution in $[0,1]$ (\texttt{rand} in Matlab}) 
				\IF{$U[0,1] < \exp\left(-\frac{\Delta}{T}\right)$} 
					\STATE $W \gets \tilde{W}$, $H \gets \tilde{H}$, $e \gets \tilde{e}$.
					\IF{$e < e_{\min}$}
						\STATE $e_{\min} \gets e$.
						\STATE $(W_{\min}, H_{\min}) \gets (\tilde{W}, \tilde{H})$.
					\ENDIF
					\IF{$e_{\min} < \text{tol}$}
						\STATE $T = T_{end}$; break. 
					\ENDIF
				\ENDIF
			\ENDFOR
			\STATE $T \gets \beta T$.
		\ENDWHILE  
		%\STATE $[W,H]$ = Local Search$(X,W_{\min},H_{\min},\alpha, \Delta t)$. 
		\STATE Return $[W_{\min},H_{\min}]$.  
	\end{algorithmic}
\end{algorithm}

\subsection{Rank-by-Rank Heuristic} \label{rbrheur}

The second heuristic tries to construct recursively an exact NMF $(W,H)$ of $X$ as follows (see Algorithm~\ref{alg:rbr}): 
\begin{itemize}

\item at the first step ($k = 1$), an optimal rank-one NMF $(W_1,H_1)$ of $X$ is computed. This can be done for example using the truncated singular value decomposition using the Perron-Frobenius and Eckart-Young theorems. 

\item At the $k$th step ($2 \leq k \leq r$), a rank-$k$ NMF solution is generated combining  the rank-$(k-1)$ NMF solution $(W_{k-1},H_{k-1})$ computed at the $(k-1)$th step with an additional rank-one factor randomly generated. 
This solution is then locally improve using $N$ steps of an NMF algorithm. 
This procedure is repeated $K$ times and the best solution is kept; see Algorithm~\ref{alg:grpo}. 

\end{itemize} 
RBR will turn out to be a powerful exact NMF heuristic for some classes of matrices 
(such as slack matrices).   
\renewcommand{\thealgorithm}{RBR} 
\begin{algorithm}[h]
	\caption{Rank-by-Rank Heuristic$(X,r,\alpha,\Delta t,K,N)$}
	\label{alg:rbr}
	\begin{algorithmic}[1]
		\REQUIRE $X \in {\mathbb R}^{m\times n}_+$, $r<\min(m,n)$, $0<\alpha<1$, $\Delta t$, $K$, $N$. 
		\STATE $[w_1,\sigma_1,h_1] \gets$ svds$(X,1)$. \emph{\% See \texttt{svds} function of Matlab} 
		\STATE $(W_1, H_1) \gets \left( \left|w_1\right|, \sigma_1 \left|h_1^T \right| \right)$ \emph{\% This is an optimal nonnegative rank-one approximation of $X$}  
		\FOR{$k=2 \to r$}
			\STATE $[W_k,H_k] \gets$ getRankPlusOne$(X,W_{k-1},H_{k-1},K,N)$.
		\ENDFOR
		%\STATE $[W,H] \gets$ Local Search$(X,W_r,H_r,\alpha,\Delta t)$.
	\end{algorithmic}
\end{algorithm}

\renewcommand{\thealgorithm}{getRankPlusOne} 
\begin{algorithm}[h]
	\caption{getRankPlusOne$(X,W,H,K,N)$} \label{alg:grpo} 
	\label{algo_getRankK}
	\begin{algorithmic}[1]
		\REQUIRE $X \in {\mathbb R}^{m\times n}_+$, $W \in {\mathbb R}^{m\times k-1}_+$, $H \in {\mathbb R}^{k-1\times n}_+$, $K$, $N$.
		\STATE $e_{\min} \gets 1$. 
		\FOR{$j=1 \to K$}
			\STATE $\left(\tilde{W}(:,1:k-1),\tilde{H}(1:k-1,:)\right) \gets ({W},{H})$. 
			\STATE $\left(\tilde{W}(:,k), \tilde{H}(k,:)\right) \gets$ random initialization$(m,n,1)$. 
			\STATE $[\tilde{W},\tilde{H}] \gets$ AlgoNMF$(X,\tilde{W},\tilde{H},N)$. 
			\STATE $\tilde{e} \gets \frac{\|X-\tilde{W}\tilde{H}\|_F}{\|X\|_F}$
			\IF{$\tilde{e} < e_{\min}$}
					\STATE $e_{\min} \gets \tilde{e}$, $W_{\min} \gets \tilde{W}$, $H_{\min} \gets \tilde{H}$. 
				\ENDIF
		\ENDFOR
		\STATE Return $[W_{\min},H_{\min}]$. 
	\end{algorithmic}
\end{algorithm}

We will use SPARSE10 for the initialization, $K = 10$ and $N = 50$ which seem to be a good compromise; see Appendix~\ref{appC} for tests of differents values.

\subsection{Hybridization} \label{hybrid}

When designing heuristics, a standard technique consists in using hybridization, that is, to combine several heuristics. 
For example, instead of refining the solution computed by \ref{alg:rbr} with the final refinement step, it is possible to call Simulated Annealing instead; in other words, we propose to initialize \ref{alg:sa} with \ref{alg:rbr}. We refer to this heuristic as `Hybrid'.

\section{Numerical Experiments: Comparing Exact NMF Heuristics} \label{ne}

In this section, we compare 
\ref{alg:ms1}, 
\ref{alg:ms2}, 
\ref{alg:sa}, 
\ref{alg:rbr} and Hybrid, 
with a maximimum number of 1000 runs, and stop the execution of an heuristic when 100 exact NMF's were found (checking this condition every 50 runs); see Table~\ref{finaltable}. 
\begin{table}[h]
\footnotesize 
\begin{center}
\caption{Comparison of the different heuristics:
\ref{alg:ms1} and \ref{alg:ms2} (Section~\ref{msstrategies}), 
\ref{alg:sa} (Section~\ref{saheur}),
\ref{alg:rbr} (Section~\ref{rbrheur}) and Hybrid (Section~\ref{hybrid}).}
\begin{tabular}{|c||c|c|c|c|c|}
\hline
 & MS1 & MS2 & SA & RBR & Hybrid \\
\hline
LEDM6 & 40/1000 (53.5) & 112/150 (3.1) & \textbf{100/100} (19.6) & \textbf{100/100} (\underline{1.4}) & \textbf{100/100} (19) \\
LEDM8 & 0/1000 ($\sim$) & 107/600 (27.1) & \textbf{100/100} (60.9) & \textbf{100/100} (\underline{16.7}) & \textbf{148/150} (63.6) \\
LEDM12 & 0/1000 ($\sim$) & 0/1000 ($\sim$) & 119/200 (42.9) & 107/650 (\underline{15.1}) & \textbf{103/150} (36.9) \\
LEDM16 & 0/1000 ($\sim$) & 0/1000 ($\sim$) & 100/250 (118.3) & 100/550 (\underline{29.1}) & \textbf{121/250} (104.2) \\
LEDM32 & 0/1000 ($\sim$) & 0/1000 ($\sim$) & \textbf{14/1000} (2592.9) & 0/1000 ($\sim$) & \textbf{28/1000} (\underline{1370.9}) \\
6-G & 108/700 (12.1) & \textbf{100/100} (2.1) & \textbf{100/100} (\underline{1.2}) & \textbf{100/100} (1.4) & \textbf{100/100} (1.5) \\
7-G & 104/350 (5.8) & \textbf{100/100} (2.2) & \textbf{100/100} (4.2) & \textbf{100/100} (\underline{1.5}) & \textbf{100/100} (4.4) \\
8-G & 61/1000 (32.2) & 129/200 (3.8) & \textbf{100/100} (15.4) & \textbf{100/100} (\underline{1.5}) & \textbf{100/100} (15.3) \\
9-G & 104/700 (12.8) & 117/200 (4.6) & \textbf{100/100} (22.9) & \textbf{100/100} (\underline{1.6}) & \textbf{100/100} (23.2) \\
16-G & 0/1000 ($\sim$) & 0/1000 ($\sim$) & 102/350 (91.6) & \textbf{143/150} (\underline{1.9}) & 118/150 (34.2) \\
32-G & 0/1000 ($\sim$) & 0/1000 ($\sim$) & 31/1000 (1086.8) & \textbf{107/250} (\underline{6.6}) & 105/300 (97) \\
20-D & 1/1000 (2021.1) & 21/1000 (160.9) & \textbf{100/100} (7.8) & \textbf{129/150} (\underline{2.3}) & \textbf{100/100} (5.6) \\
24-C & 0/1000 ($\sim$) & 0/1000 ($\sim$) & \textbf{100/100} (\underline{3.1}) & 119/200 (4.1) & \textbf{100/100} (4.4) \\
UDISJ4 & 102/250 (4) & \textbf{100/100} (2.4) & \textbf{100/100} (\underline{1.2}) & \textbf{100/100} (1.9) & \textbf{100/100} (1.9) \\
UDISJ5 & 104/850 (17.4) & 102/500 (38) & \textbf{100/100} (\underline{2.8}) & \textbf{100/100} (4.9) & \textbf{100/100} (5.2) \\
UDISJ6 & 7/1000 (337.1) & 8/1000 (1594.7) & \textbf{100/100} (\underline{7.8}) & 112/450 (66.4) & \textbf{100/100} (18.5) \\
RND1 & \textbf{148/150} (\underline{1.1}) & \textbf{100/100} (2.8) & \textbf{100/100} (\underline{1.1}) & \textbf{100/100} (2.2) & \textbf{100/100} (2.2) \\
RND3 & \textbf{100/100} (\underline{1.1}) & \textbf{100/100} (2.8) & \textbf{100/100} (\underline{1.1}) & \textbf{100/100} (2.2) & \textbf{100/100} (2.2) \\
\hline
\end{tabular}
\label{finaltable}
\end{center}
\end{table} 

As already pointed out, the multi-start heuristics perform rather poorly and are not able to compute even a single exact NMF in many cases.  
We observe that 
\begin{itemize}

\item \ref{alg:rbr} is able to compute an exact NMF for all matrices but LEDM32, while \ref{alg:sa} and Hybrid are able to find an exact NMF for all matrices. 

\item In terms of robustness, Hybrid is the best as it is able to compute on average the most exact NMF's for a fixed number of runs. 

\item In terms of running times, \ref{alg:rbr} is on average the fastest, while Hybrid is comparatively much slower. 

\end{itemize}

Therefore, in practice, we would recommend to first run RBR as it computes, in many cases, exact NMF's the fastest. 
Moreover, for some matrices (e.g., slack matrices of regular $n$-gons), it is very robust. Then, when RBR fails, we would recommend to run Hybrid because of its robustness: although it is slower, it is in general more likely to find exact NMF's.

\subsection{Limits of the Heuristics for Exact NMF}  \label{limits}

Computing exact NMF's becomes more challenging when the dimensions and the nonnegative rank of the matrix increases, as the computational complexity of the problem depends on these dimensions (see the discussion in Section~\ref{ccomp}). To illustrate the limitations of the use of heuristics to find exact NMF's, 
Table~\ref{tablelarge} reports the computational results for larger slack matrices of regular $n$-gons. \\

\begin{table}[h]
\begin{center}
\caption{}
\begin{tabular}{|c||c|}
\hline
 & Hybrid \\
\hline
110-G & \textbf{14/1000} (\underline{12050.3}) \\
120-G & \textbf{12/1000} (\underline{15556.4}) \\
130-G & \textbf{12/1000} (\underline{16462.7}) \\
140-G & \textbf{15/1000} (\underline{14002.6}) \\
150-G & \textbf{5/1000} (\underline{49277}) \\
160-G & \textbf{1/1000} (\underline{144803.1}) \\
170-G & 0/1000 ($\sim$) \\
\hline
\end{tabular}
\label{tablelarge}
\end{center}
\end{table}

Moreover, for LEDM of size 48-by-48 or larger, and for slack matrices of regular $n$-gons with $n \geq 170$, 
none of our heuristics is able to find a single exact NMF's out of 1000 runs.

\section{Using Exact NMF Heuristics for New Insights on the Nonnegative Rank} \label{conject}

In this section, we describe four important open questions related to the nonnegative rank, and show how computing exact NMF's of small matrices can help gain insights about them.

\subsection{Kronecker Product of Two Nonnegative Matrices} \label{krone}

In a recent Dagsthul seminar \cite{BLHT13}, 
participants came up with the following conjecture: given two nonnegative matrices $A$ and $B$, the nonnegative rank of their Kronecker product is equal to the product of their nonnegative ranks, that is, 
\[
\rank_+(A \otimes B) = \rank_+(A) \rank_+(B). 
\]
Note that this results holds for the usual rank, and that it is easy to show that $\rank_+(A \otimes B) \leq \rank_+(A) \rank_+(B)$ (see also \cite{Wat14} for a short discussion). Hamza Fawzy used the multi-start strategy MS1 to come up with the following counter example: 
\[ 
A = \left( \begin{array}{cccc}
    1  &  0 &  1  &  a\\
    0  &  1 &  0  &  1-a\\
    0  &  0 &  1  &  1-a\\
    1  &  1 &  0  &  a \\
    \end{array}
\right),
\]
where $a = 3/8$ from \cite{BCR11} for which $\rank_+(A) = 4$ and $\rank_+(A \otimes A) = 15$. One may therefore 
wonder whether the following is true 
\[
\rank_+(A \otimes B) \geq \rank_+(A) \rank_+(B) - 1 \quad ? 
\]
It turns out that it is also incorrect. 
In fact, we have found a 4-by-4 nonnegative matrix $A$ with rank 3 and nonnegative rank 4 such that $\rank_+(A \otimes A)= 12$ : 
\[ 
A = \left( \begin{array}{cccc}
    1+a  &  1-a  &  1-a  &  1+a\\
    1-a  &  1+a  &  1+a  &  1-a\\
    1+a   & 1+a  &  1-a  &  1-a\\
    1-a  &  1-a &   1+a  &  1+a \\
    \end{array}
\right), 
\]
where $a = \sqrt{2}-0.9$. Geometrically, the matrix $A$ is the (generalized) slack matrix of a pair of polytopes, namely two nested squares: the rows of $A$ correspond to the edges of the outer square and its columns to the vertices of the inner square; see \cite{G12} for more details. 
For $\sqrt{2}-1 < a \leq 1$, $\rank_+(A) = 4$. However, for $a = 1$ (which corresponds to the slack matrix of the square, that is, the regular 4-gon), $\rank_+(A \otimes A)= 16$. Decreasing $a$ sufficiently while keepig $a > \sqrt{2}-1$ allows to decrease $\rank_+(A \otimes A)$ to 12 while keeping $\rank_+(A) = 4$. 
The intuition behind this example is the following: decreasing $a$ leaves more space between the two squares although no triangle fits between the two (hence $\rank_+(A) = 4$). However, this makes the search space of the exact NMF problem for $A \otimes A$ much larger, leading to the existence of an exact NMF with smaller rank.

How the nonnegative rank of the Kronecker product between two matrices relates with their nonnegative ranks remains an open question.
This is an important open question, and, as illustrated above, exact NMF algorithms are useful tools to address such questions. 
In light of the above example, a new conjecture could be the following:  
\begin{conj} \label{conj1}
For any nonnegative matrix $A$, 
\[
\rank_+(A \otimes A) \geq \rank_+(A) \rank(A).  
\]
\end{conj}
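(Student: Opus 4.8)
The plan for attacking Conjecture~\ref{conj1} is to reduce the Kronecker factorization to a cone-covering problem by vectorization, and then to try to combine the elementary rank lower bound with an elementary per-slice nonnegative-rank lower bound. Write $d = \rank(A)$ and $r = \rank_+(A)$. If $r = d$ there is nothing to prove, since then $\rank_+(A\otimes A) \ge \rank(A\otimes A) = \rank(A)^2 = rd$; so assume $1 < d < r$. Identifying $\mathbb{R}^{m^2}$ with $\mathbb{R}^{m\times m}$ (coordinate $(p,q)$ $\leftrightarrow$ entry $(p,q)$), a candidate factorization $A\otimes A = \sum_{k=1}^{t} u_k v_k^\top$ with $u_k \ge 0$ and $v_k \ge 0$ reshapes to nonnegative matrices $U_k \in \mathbb{R}^{m\times m}_+$ and $V_k \in \mathbb{R}^{n\times n}_+$, and the factorization identity becomes
\[
A_{pr}\,A_{qs} \;=\; \sum_{k=1}^{t} (U_k)_{pq}\,(V_k)_{rs} \qquad \text{for all } p,q \in \{1,\dots,m\},\ r,s\in\{1,\dots,n\}.
\]
Since the reshaped columns of $A\otimes A$ are precisely the rank-one matrices $A_{:,r}A_{:,s}^\top$, this says that $A\otimes A$ has an exact NMF with $t$ terms if and only if there exist $U_1,\dots,U_t \in \mathbb{R}^{m\times m}_+$ such that
\[
\mathcal{K}_A \;:=\; \cone\bigl\{\, v\,w^\top \;:\; v,w \in \cone(\text{columns of }A)\,\bigr\} \;\subseteq\; \cone\{U_1,\dots,U_t\} \;\subseteq\; \mathbb{R}^{m\times m}_+ ;
\]
so $\rank_+(A\otimes A)$ is the least $t$ for which such a covering exists, and the goal is to prove $t \ge rd$ for every covering.

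Two lower bounds on $t$ are immediate. \emph{(i) Rank bound}: $\operatorname{span}(\mathcal{K}_A) = \col(A)\otimes\col(A)$ has dimension $d^2$, hence $t \ge \dim\operatorname{span}\{U_1,\dots,U_t\} \ge d^2$, which merely recovers $\rank_+(A\otimes A)\ge\rank(A\otimes A)$. \emph{(ii) Per-slice bound}: fixing a row index $p$ with row $p$ of $A$ nonzero and letting $r$ range in the displayed identity yields $A_{pr}\,A = \sum_k (U_k)_{p,:}^\top\,(V_k)_{r,:}$, so every column of $A$ lies in $\cone\{(U_k)_{p,:}^\top : k\le t\} \subseteq \mathbb{R}^m_+$, whence $t \ge \rank_+(A) = r$. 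Thus the elementary arguments give only $t \ge \max(d^2,r)$, which is strictly weaker than the conjectured $rd$ precisely when $1<d<r$. The whole content of Conjecture~\ref{conj1} is to show that the rank obstruction (i) and the nonnegativity obstruction (ii) cannot be met independently: the $t$ generators $U_k$ must at once exhaust a $d^2$-dimensional linear span and provide at least $r$ conic generators in each of the $m$ slices.

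What I would try is to sharpen the dimension count (i) by bookkeeping the nonnegativity that (ii) provides. After discarding redundant generators, one would partition $\{1,\dots,t\}$ according to which direction of $\col(A)$ a given $U_k$ mainly serves (for instance by the behaviour of the slices $(U_k)_{p,:}$ as $p$ varies, or by the leading component of $U_k$ in a fixed basis of $\col(A)\otimes\col(A)$), and argue that each class can contribute at most $d$ independent directions to $\mathcal{K}_A$ while, by (ii), some class must still contain at least $r$ generators. Making such a partition work -- equivalently, exhibiting a lower bound for $\rank_+$ that dominates $\rank$ and is super-multiplicative against $\rank$ under Kronecker products -- is exactly the open part, and is the main obstacle: none of the standard lower bounds for the nonnegative rank (rank, rectangle covering, fooling sets, hyperplane separation) is known to combine in this fashion, and any linear projection realizing (i) destroys the nonnegativity used in (ii). As an intermediate target I would first settle $d = 3$ (slack matrices of polygons, and of pairs of nested polygons such as the nested squares of Section~\ref{krone}), where after normalization $\mathcal{K}_A$ is the cone over a two-parameter family of rank-one matrices coming from planar point configurations, so a nested-polytope argument in the spirit of Yannakakis' theorem~\cite{Y91} may be available; and I would corroborate the value $rd$ numerically on further Kronecker squares, noting that the example of Section~\ref{krone} already shows it is tight (e.g.\ $\rank_+(A\otimes A)=12=4\cdot 3$ for $a=\sqrt{2}-0.9$).
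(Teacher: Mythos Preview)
The statement you are attempting is Conjecture~\ref{conj1}, which the paper explicitly presents as an \emph{open conjecture}; there is no proof in the paper to compare your proposal against. The paper merely motivates the bound by the numerical example $\rank_+(A)=4$, $\rank(A)=3$, $\rank_+(A\otimes A)=12$, which shows the inequality would be tight.

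Your proposal is likewise not a proof but a research outline, and you say so yourself (``Making such a partition work \dots\ is exactly the open part''). What you do establish rigorously is correct: the reshaping $u_k \mapsto U_k$ is the right reformulation, the rank bound $t\ge d^2$ is immediate, and the per-slice argument giving $t\ge \rank_+(A)$ is valid (though note the notational clash: you use $r$ both for $\rank_+(A)$ and as a column index in $A_{pr}$). Together these give only $\rank_+(A\otimes A)\ge\max\bigl(\rank(A)^2,\rank_+(A)\bigr)$, which is strictly weaker than the conjecture whenever $\rank(A)<\rank_+(A)$, as you correctly observe.

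The genuine gap is precisely where you locate it: the proposed ``partition of $\{1,\dots,t\}$ according to which direction of $\col(A)$ a given $U_k$ mainly serves'' is not an argument but a hope, and you give no mechanism by which a class must simultaneously contain $r$ generators \emph{and} there must be $d$ such classes contributing independent directions. The obstruction you name --- that any linear projection realizing the rank bound destroys the nonnegativity used in the slice bound --- is real, and none of the standard nonnegative-rank lower bounds (rectangle covering, fooling sets, hyperplane separation, common-information type bounds) is currently known to behave super-multiplicatively against ordinary rank under Kronecker products. Your plan to first attack $d=3$ via nested-polygon geometry is reasonable as a research direction, but as written the proposal does not advance beyond the trivial bound $\max(d^2,r)$, and hence does not settle the conjecture.
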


\subsection{Slack Matrices of Regular $n$-gons}

As explained in the introduction, the nonnegative rank of the slack matrix $X_n$ of the regular $n$-gon is equal to its extension complexity, that is, to the minimum number of facets a higher dimensional polytope requires to represent it after a linear projection.   
It can be shown that $\rank_+(X_n) \geq \left\lceil \log_2(2n+2) \right\rceil$ \cite{G09}. 
Ben-Tal and Nemirovski \cite{BTN01} gave an extension of regular $n$-gons when $n$ is a power of two ($n = 2^k$ for some $k$) with $2 \log_2(n) + 4$ facets. %showing that $\rank_+(X) \leq 2 \left\lceil \log_2(n) \right\rceil$. 
They used this construction to approximate the circle with regular $n$-gons which allowed them to approximate second-order cone programs with linear programs. Another construction for arbitrary $n$ was proposed in \cite{FRT12} 
%and the references therein for more details on the extension complexity of regular $n$-gons. 
showing that $\rank_+(X_n) \leq \left\lceil 2 \log_2(n) \right\rceil$. 
However, the exact value of $\rank_+(X_n)$ is unknown (except for small $n$; see below). 

We have run the Hybrid heuristic on these matrices for all $n \leq 78$ and observe the following:  
\begin{conj} \label{conj2}
The nonnegative rank of the slack matrix $X_n$ of the regular $n$-gon is given by  
\[
    \rank_+(X_n) = 
		 \left\{ \begin{array}{ccccc} 
		2k-1 & \text{for } & 2^{k-1}       & < \quad n  \quad \leq & 2^{k-1}+2^{k-2}, \\ 
		2k   & \text{for } & 2^{k-1}+2^{k-2} & <  \quad n   \quad\leq &  2^{k} . 
  \end{array} \right. 
\] 
\end{conj}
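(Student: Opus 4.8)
\emph{Proof strategy.} The statement packs together an upper bound $\rank_+(X_n) = \xc(P_n) \le f(n)$ and a matching lower bound $\xc(P_n) \ge f(n)$, where $f(n)$ denotes the piecewise value in the display. The two halves call for entirely different tools, and the plan would be to carry the upper bound to completion with an explicit extended formulation, and to isolate the lower bound as the genuinely open part.

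For the upper bound I would start from the $\lceil 2\log_2 n\rceil$-facet construction of \cite{FRT12}, which in fact already equals $f(n)$ for all but a thin band of values of $n$. A direct comparison shows that $\lceil 2\log_2 n\rceil$ exceeds $f(n)$ --- by exactly one --- only when $2^{k-1}\sqrt{2} < n \le 3\cdot 2^{k-2}$, that is, for $n \in \{6,12,23,24,46,47,48,\dots\}$. For the subfamily $n = 3\cdot 2^{j}$ this extra facet is removed by combining the base case $\xc(P_3) = 3$ (a triangle) with a recursive step of the kind implicit in \cite{BTN01, FRT12}, namely $\xc(P_{2n}) \le \xc(P_n) + 2$ --- realised by writing the $2n$ facets of $P_{2n}$ as those of a scaled and rotated copy of $P_n$ together with two inequalities that implement one angle-halving in an auxiliary coordinate --- which yields $\xc(P_{3\cdot 2^{j}}) \le 2j+3 = f(3\cdot 2^{k-2})$. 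The remaining values in the band ($n = 23, 47, 91, \dots$, which are not of the form $(\text{small base})\cdot 2^{j}$) are the only genuine work on this side: each of them is $n-1$ or $n-2$ away from a multiple of the form $3\cdot 2^{j}$, so they would follow at once from a monotonicity statement $\xc(P_{n-1}) \le \xc(P_n)$ for regular polygons, or else from a low-order refinement of the \cite{FRT12} recursion. I expect this direction to be attainable with care, and it would already sharpen the best published bound.

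The hard part is the matching lower bound, and here I do not see a complete route. The best bound currently available is $\xc(P_n) \ge \lceil \log_2(2n+2)\rceil$ \cite{G09}, short of $f(n) \approx 2\log_2 n$ by a factor of about two, so a qualitatively stronger argument is needed. The rectangle covering bound is of no help: the support of $S_{P_n}$ has exactly $2n$ zeros arranged as a single cycle, and its rectangle covering number is again only $\Theta(\log n)$. The most promising angle is geometric --- $P_n$ approximates the unit disk to accuracy $\Theta(n^{-2})$, and any linear-programming approximation of the disk to accuracy $\varepsilon$ needs $\Omega(\log(1/\varepsilon))$ inequalities --- but extracting the \emph{sharp constant} $2$ from such an argument, let alone the precise breakpoints at $3\cdot 2^{k-2}$, is itself open, as is obtaining it from the hyperplane-separation / common-information family of lower bounds. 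A realistic intermediate goal, and the one I would attack first, is the clean estimate $\xc(P_n) \ge 2\log_2 n - O(1)$; even this would be new, and the full conjecture rests on sharpening it enough to pin down the additive constant and the exact transitions. Until such a lower bound appears, the extensive numerical verification reported above (the Hybrid heuristic confirming $f(n)$ for every $n \le 78$) is the principal support for the statement.
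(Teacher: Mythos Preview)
The statement is labeled a \emph{conjecture} in the paper and is not proved there; the paper's entire support is (i) numerical---the Hybrid heuristic reproduces the displayed value for every $3\le n\le 78$ and fails to find any factorization of smaller rank---and (ii) the remark that for $n\le 9$ the value matches a lower bound from \cite{ArnaudStefan14}. So there is no proof to compare against, and your write-up is already accurate in presenting the statement as open and resting chiefly on the computational evidence.

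Where you go beyond the paper is the upper-bound sketch, and most of it is sound: the comparison of $\lceil 2\log_2 n\rceil$ with $f(n)$, the identification of the exceptional band $2^{k-1}\sqrt{2}<n\le 3\cdot 2^{k-2}$, and the bound $\xc(P_{3\cdot 2^{j}})\le 2j+3$ via the doubling recursion starting from the triangle are all correct (and none of this appears in the paper). The one real gap---which you flag yourself---is the step handling the remaining $n$ in the band (e.g.\ $23,46,47,91,\dots$): the monotonicity $\xc(P_{n-1})\le\xc(P_n)$ you invoke is \emph{not} known for regular polygons, since $P_{n-1}$ is neither a face nor a linear image of $P_n$, and a ``low-order refinement of the \cite{FRT12} recursion'' is a hope rather than an argument. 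So even the upper-bound half remains incomplete, though less dramatically so than the lower bound.

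On the lower bound you and the paper are in full agreement that nothing close to $2\log_2 n$ is available; your observations about the rectangle covering bound and the disk-approximation heuristic are correct context but are not in the paper.
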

Note that the conjecture is known to be true for $n \leq 9$ as it matches a lower bound based on the rectangle covering bound improved with additional rank constraints from \cite{ArnaudStefan14}.  

For all slack matrices with\footnote{Because it requires a rather high computational cost for larger $n$, we stopped testing the conjecture at $n=78$. In fact, running this experiment on a regular laptop took about two weeks.} $3 \leq n \leq 78$, Hybrid was able to compute at least one exact NMF matching the nonnegative rank given in Conjecture~\ref{conj2}, while it was never able to compute a single exact NMF with a smaller nonnegative rank (out of 1000 runs). 
 Figure~\ref{regngons} displays the number of exact NMF's found out of 1000 initializations of Hybrid for the nonnegative rank given in Conjecture~\ref{conj2}. 
\begin{figure}[ht!] 
\centering 
   \includegraphics[width=9cm]{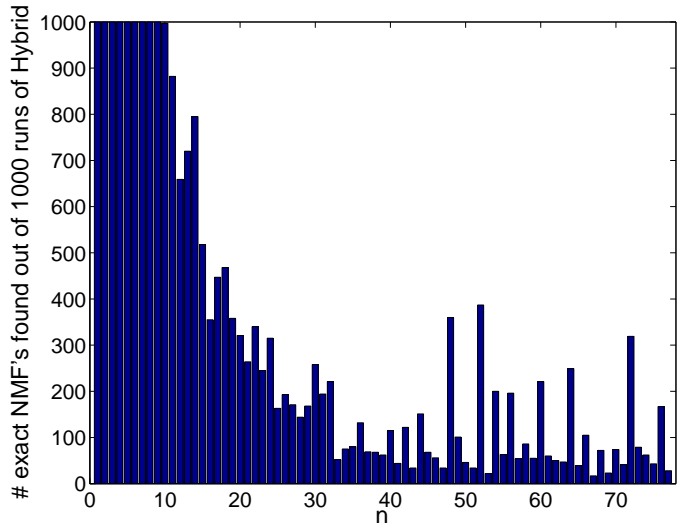} 
\caption{Number of exact NMF found out of 1000 runs of Hybrid on regular $n$-gons and for the nonnegative rank given in Conjecture~\ref{conj2}.}
\label{regngons}
\end{figure}

It is interesting and quite natural to observe that, as $n$ increases, Hybrid meets more and more difficulty to compute an exact NMF of these slack matrices. This illustrates the limits of heuristics to solve exact NMF problems for larger (and difficult) matrices; see also Section~\ref{limits}.

\subsection{Generic $n$-gons}

A generic $n$-gon is an $n$-gon for which the vertices were generated randomly\footnote{The exact definition given in \cite{FRT12} is the following: `a polygon in $\mathbb{R}^2$ is generic if the coordinates of its vertices are distinct and form a set that is algebraically independent over the rationals'.}. 
It is known that the nonnegative rank of the slack matrices $X_n$ of generic $n$-gons satisfy 
\[
\sqrt{2n} 
\quad \leq  \quad  
\rank_+(X_n) 
\quad \leq  \quad  
 \left\lceil \frac{6n}{7} \right\rceil. 
\]
The lower bound is due to \cite{FRT12}, while the upper bound is generic for any $n$-gon and is due to Shitov~\cite{Shit14} (it was also proved using different arguments in \cite{PP14}).  
An important question is to characterize the growth of the the nonnegative rank of these slack matrices: is it proportional to $\sqrt{n}$, $n$ or something in between \cite{Gouveia} ? 

As $n$ increases, it becomes more and more difficult to generate generic $n$-gons (because it is likely that a newly generated point belongs to the convex hull of the previously generated points). 
%that is, so that each vertex is relatively well separated form the convex hull generated by the other vertices. 
Therefore we used the following procedure. %Our settings are the following. 
We generate random $n$-gons whose vertices lie on the unit circle. 
To obtain polygons whose vertices are relatively well separated form the convex hull generated by 
the other vertices\footnote{As a vertex gets closer and closer to the convex hull generated by the other vertices, it becomes numerically harder and harder to decide whether or not it belongs to the convex hull.}, 
we subdivide the circle into $n$ disjoint arcs of the same length. Then, each arc is divided into four parts of the same length and we only generate one point randomly into the two middle parts (uniformly distributed). This ensures the angles between any two data points to be larger than $\frac{\pi}{n}$. Then, for each $n$, we generate ten such random $n$-gons and run Hybrid with 1000 runs. Table~\ref{tableirregular} reports the minimum and maximum number of exact NMF's found among these ten matrices. 
\begin{sidewaystable}[h]
\footnotesize
\begin{center}
\caption{Nonnegative rank of random $n$-gons on the circle: for a given $n$, the table reports the minimum and maximum number of exact NMF's found by Hybrid out of 1000 runs on ten such $n$-gons.}
\begin{tabular}{|c||c|c|c|c|c|c|c|c|c|c|c|c|}
\hline
r/n & $6$ & $7$ & $8$ & $9$ & $10$ & $11$ & $12$ & $13$ & $14$ & $15$ & $16$ & $17$ \\
\hline
$4$ &  & & & & & & & & & & &\\
$5$ & [0,0] &[0,0] & & & & & & & & & &\\
$6$ & [1000,1000] &[463,1000] &[0,0] &[0,0] & & & & & & & &\\
$7$ &  & &[754,897] &[160,353] &[0,0] &[0,0] & & & & & &\\
$8$ &  & & &[743,873] &[351,443] &[25,48] &[0,0] &[0,0] & & & &\\
$9$ &  & & & &[787,858] &[401,546] &[148,190] &[10,19] &[0,0] &[0,0] &[0,0] &\\
$10$ &  & & & & &[692,862] &[580,665] &[242,389] &[63,111] &[5,19] &[0,1] &[0,0]\\
$11$ &  & & & & & &[833,902] &[533,726] &[385,540] &[150,247] &[9,82] &[5,14]\\
$12$ &  & & & & & & &[734,874] &[643,766] &[442,631] &[138,365] &[107,204]\\
$13$ &  & & & & & & & & &[671,824] &[375,674] &[405,517]\\
$14$ &  & & & & & & & & & &[610,830] &[583,734]\\
$15$ &  & & & & & & & & & & &[721,829]\\
\hline
\end{tabular}
\label{tableirregular}
\end{center}
\end{sidewaystable}

These results suggests for example that generic 12-gons have extension complexity equal to 9 --which also suggests that all 12-gons have extension complexity smaller than $9$. %(Note that we have also generated `true' generic $n$-gons whose vertices were randomly generated in the plane and have observed the same behavior). 
More generally, these results lead us to the following conjecture 
\begin{conj} \label{conjsm}
The nonnegative rank of the slack matrix $S_n$ of any $n$-gon is bounded above 
by $\left\lfloor \frac{n+6}{2} \right\rfloor$ where $\left\lfloor x \right\rfloor$ is the largest integer smaller than $x$, that is, 
\[
\rank_+(S_n) \; \leq \; \left\lfloor \frac{n+6}{2} \right\rfloor , 
\]
and equality holds for $5 \leq n \leq 15$. 
\end{conj}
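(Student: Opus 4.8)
The plan is to separate the statement into its two halves. The first is the universal upper bound: every $n$-gon $\mathcal{P}$ satisfies $\rank_+(S_{\mathcal{P}}) \le \lfloor (n+6)/2 \rfloor$. The second is that, for each $n$ in the finite range $5 \le n \le 15$, this bound is attained by some (generic) $n$-gon --- for instance one of the random $n$-gons underlying Table~\ref{tableirregular}. By Yannakakis's theorem \cite{Y91}, the first half is equivalent to exhibiting, for an arbitrary $n$-gon $\mathcal{P}$, an extended formulation with at most $\lfloor (n+6)/2 \rfloor$ facets --- equivalently, an explicit rank-$\lfloor (n+6)/2 \rfloor$ exact NMF of $S_{\mathcal{P}}$; the second half amounts to certifying that the slack matrix of the chosen witness admits \emph{no} nonnegative factorization of rank $\lfloor (n+6)/2 \rfloor - 1$.

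For the upper bound I would first dispose of the easy cases: an $n$-gon has $n$ facets, so the bound is trivial whenever $\lfloor (n+6)/2 \rfloor \ge n$, i.e.\ for $n \le 6$, and for $n = 7,8$ it already follows from Shitov's bound $\rank_+(S_{\mathcal{P}}) \le \lceil 6n/7 \rceil$ \cite{Shit14}. For $n \ge 9$ one needs a construction that genuinely beats $\lceil 6n/7 \rceil$, asymptotically of size $n/2 + O(1)$. A rough degrees-of-freedom count is encouraging: a polytope in $\mathbb{R}^4$ or $\mathbb{R}^5$ with about $n/2 + 3$ facets, together with a linear projection to the plane, carries roughly $2n$ free parameters, comparable to the $2n - 6$ parameters of a generic $n$-gon, so such a low-dimensional lift plausibly exists. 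I see two routes toward an explicit one: (i) adapt the Ben-Tal--Nemirovski reflection/unfolding construction \cite{BTN01}, which produces an $\mathcal{O}(\log n)$-facet lift of the \emph{regular} $n$-gon by repeated ``doubling'', into a construction performing about $n/2$ unfoldings --- each adding two vertices but only one facet --- while controlling the accumulated distortion so that the projected image is exactly $\mathcal{P}$; or (ii) refine Shitov's block decomposition, replacing his ``$7$-edges-by-$6$-facets'' gadget by one handling a block of $k$ consecutive edges with only $\lceil k/2 \rceil + O(1)$ facets, then applying it to a single block covering the whole boundary. In either case the construction must be shown valid for \emph{all} $n$-gons, including degenerate configurations, not only generic or near-regular ones.

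For the equality part, assume the upper bound provides $r := \lfloor (n+6)/2 \rfloor$; it then remains, for each of the eleven values $5 \le n \le 15$, to prove $\rank_+(S_{\mathcal{P}}) \ge r$ for one $n$-gon $\mathcal{P}$, which is a finite task. For the smallest values this should follow from the rectangle-covering bound strengthened with rank constraints, as in \cite{ArnaudStefan14}. However, generic and regular $n$-gons have the \emph{same} support pattern, so any lower bound that ignores the numerical entries of $S_{\mathcal{P}}$ --- in particular plain or rank-augmented rectangle covering --- cannot exceed the extension complexity of the regular $n$-gon, which is only $\mathcal{O}(\log n)$; hence for $n$ close to $15$ one must exploit the actual, algebraically independent entries of $S_{\mathcal{P}}$. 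In principle the non-existence of a rank-$(r-1)$ exact NMF of the $n$-by-$n$ matrix $S_{\mathcal{P}}$ is decidable by the methods of Arora et al.\ \cite{AGKM11} and Moitra \cite{Moit13}, but as noted in Section~\ref{ccomp} these are impractical even for $4$-by-$4$ matrices, so a lower-bound argument tailored to slack matrices of polygons (and using genericity) would be required.

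The principal obstacle is thus structural rather than computational: at present there is neither a general extended-formulation construction for $n$-gons below Shitov's $\lceil 6n/7 \rceil$, nor a lower bound for generic $n$-gons above the $\sqrt{2n}$ of \cite{FRT12}; moreover a linear-in-$n$ lower bound would in particular settle the long-standing question of whether the extension complexity of a generic $n$-gon grows like $\sqrt{n}$ or like $n$. Consequently a realistic proof would currently cover only (a) the trivial and Shitov range $n \le 8$ of the upper bound, and (b) whichever of the cases $5 \le n \le 15$ of the equality can be closed by rank-augmented rectangle covering, leaving the remainder of the conjecture supported by the numerical evidence of Table~\ref{tableirregular} rather than by a rigorous certificate.
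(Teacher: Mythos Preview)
The statement you are analyzing is a \emph{conjecture}, and the paper does not prove it. The paper's own ``argument'' for Conjecture~\ref{conjsm} consists entirely of the numerical evidence in Table~\ref{tableirregular}: for ten randomly generated $n$-gons with $5 \le n \le 17$, the Hybrid heuristic found exact NMF's at rank $\lfloor (n+6)/2\rfloor$ and failed to find any at rank $\lfloor (n+6)/2\rfloor - 1$. No extended formulation is constructed, and no lower bound is established beyond those already in the literature. The subsequent Corollary in the paper is \emph{conditional} on Conjecture~\ref{conjsm}; its sketch of proof only shows how the conjecture for slack matrices of $n$-gons would transfer to arbitrary rank-three nonnegative matrices, not how to prove the conjecture itself.

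Your proposal is therefore not to be compared against a proof in the paper, because there is none. What you have written is an accurate assessment of the obstructions: the upper bound would require an extended-formulation construction strictly below Shitov's $\lceil 6n/7\rceil$ for all $n$-gons (not just regular or near-regular ones), and the equality part for $n$ near $15$ would require lower bounds that are sensitive to the numerical entries of $S_{\mathcal{P}}$, since support-based bounds such as rectangle covering cannot separate generic from regular $n$-gons. Your conclusion --- that with current techniques only $n\le 8$ for the upper bound and a few small cases of equality can be handled rigorously, the rest resting on numerical evidence --- is exactly the status the paper itself implicitly assigns to the conjecture. In short, you have correctly diagnosed that the statement is open, and your outline of what a proof would need matches the gap between the $\sqrt{2n}$ lower bound of \cite{FRT12} and the $\lceil 6n/7\rceil$ upper bound of \cite{Shit14} that the paper highlights just before stating the conjecture.
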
 
Another open question is the following: \emph{For $n$ fixed, are the nonnegative ranks of the slack matrices of all generic $n$-gons equal to one another?} 
These experiments suggest that the answer is positive for $n \leq 15$:  
in fact, in all cases we observe that either no exact NMF is found for the ten randomly generated matrices, or at least some are found for all of them. 
For $n = 16$, it is less clear whether this is true: we were only able to compute a rank-10 exact NMF for two of the generated matrices. %Is it because generic $n$-gons might have different extension complexities for $n \geq 16$? Is it because the generated matrices are not fully generic? Is it because our heuristic fails to compute exact NMF for the other instances? 
This might be because these matrices are not fully generic, or because, for $n \geq 16$,  generic $n$-gons might have different extension complexities, or because our heuristic fails to compute the exact NMF of such instances. 
We leave the investigation of these issues for further research.  \\

The validity of conjecture~\ref{conjsm} would imply the following.
\begin{cor*}[of Conjecture \ref{conjsm}] 
The nonnegative rank of any rank-3 nonnegative matrix $X$ satisfies 
\[
\rank_+(X) \; \leq \; \left\lfloor \frac{\min(m,n)+6}{2} \right\rfloor . 
\]
\end{cor*}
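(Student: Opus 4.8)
The plan is to reduce the statement, via the standard geometric picture for rank-$3$ nonnegative matrices, to Conjecture~\ref{conjsm} applied to a single polygon having at most $\min(m,n)$ vertices. Because $\rank_+(X)=\rank_+(X^{T})$, I may assume $n=\min(m,n)$, i.e.\ $n\le m$. If $\rank(X)\le 2$ then $\rank_+(X)=\rank(X)\le 2\le\lfloor(n+6)/2\rfloor$ by \cite{Tho,CR93}, so assume $\rank(X)=3$. Deleting the zero columns of $X$ affects neither its rank nor its nonnegative rank and does not increase the number of columns; each remaining column then has positive entry sum, and dividing it by that sum leaves $\rank_+$ unchanged. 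Hence I may assume the columns of $X$ sum to one, so that, since $\rank(X)=3$, they lie in a $2$-dimensional affine flat; let $R$ be their convex hull, a polygon with $n'$ vertices where $3\le n'\le n$.

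The crux is the inequality $\rank_+(X)\le\xc(R)$. Writing each column of $X$ as a convex combination of the $n'$ vertices of $R$ produces a factorization $X=VC$ in which the columns of $V\in\mathbb{R}^{m\times n'}_+$ are exactly the vertices of $R$ and $C\in\mathbb{R}^{n'\times n}_+$ is column-stochastic, so $\rank_+(X)\le\rank_+(V)$. Each row of $V$ evaluates a nonnegative affine functional on the $2$-flat at the vertices of $R$; by Farkas' lemma such a functional is a nonnegative combination of the edge-slack functionals of $R$ (for a chosen normalization) and of the constant functional $1$, which means $V=M\,S_R+c\,\mathbf{1}^{T}$ for some $M\ge 0$ and $c\ge 0$, where $S_R$ is the slack matrix of $R$. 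If the edges of $R$ are normalized so that their outward normals, scaled by the corresponding edge lengths, sum to zero, then the all-ones row $\mathbf{1}^{T}$ lies in the nonnegative row span of $S_R$; this lets me absorb the rank-one term $c\,\mathbf{1}^{T}$ into a nonnegative factorization of $S_R$ and conclude $\rank_+(V)\le\rank_+(S_R)=\xc(R)$. (Equivalently, $V$ is --- up to redundant rows --- the generalized slack matrix of the nested pair consisting of $R$ and the polygon cut out by the halfplanes associated with the rows of $V$, and $\rank_+(X)\le\xc(R)$ then follows from the geometric interpretation of the nonnegative rank of such a pair by choosing the sandwiched polygon to be $R$; see \cite{G12}.)

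It remains to bound $\xc(R)$. Since $R$ is an $n'$-gon, Conjecture~\ref{conjsm} --- which bounds the extension complexity of an arbitrary $k$-gon by $\lfloor(k+6)/2\rfloor$ --- together with Yannakakis' identity $\xc(R)=\rank_+(S_R)$ gives $\xc(R)\le\lfloor(n'+6)/2\rfloor\le\lfloor(n+6)/2\rfloor$. Chaining $\rank_+(X)\le\rank_+(V)\le\xc(R)\le\lfloor(\min(m,n)+6)/2\rfloor$ completes the proof. I expect the genuinely delicate point to be the reduction $\rank_+(X)\le\xc(R)$: it rests on the known but slightly technical dictionary between rank-$3$ nonnegative matrices and nested polygons --- in particular on the fact that $\mathbf{1}^{T}$ lies in the nonnegative row span of a suitably scaled slack matrix --- and on some care with degenerate configurations, namely zero or repeated columns and the small cases $n'\in\{3,4,5,6\}$, which are in any case harmless since there $\xc(R)\le n'\le\lfloor(n'+6)/2\rfloor$.
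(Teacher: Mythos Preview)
Your proposal is correct and follows essentially the same route as the paper's sketch: reduce to the nested-polygon picture for rank-$3$ nonnegative matrices, bound $\rank_+(X)$ by the extension complexity of one of the two polygons (you pick the inner polygon $R=\conv(\text{columns})$; the paper picks the outer polygon and then invokes polarity/transposition to swap the roles), and finally apply Conjecture~\ref{conjsm} to a polygon with at most $\min(m,n)$ vertices. Your version spells out the reduction $\rank_+(X)\le\xc(R)$ explicitly via Farkas and the observation that the constant row lies in the nonnegative row span of a suitably scaled $S_R$, whereas the paper simply points to the geometric interpretation in~\cite{GG10b} and to~\cite{Shit14} for these details.
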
 
\begin{proof}[Sketch of proof] 
According to the geometric interpretation of NMF detailed in \cite{GG10b}, an exact NMF problem for a nonnegative rank-three matrix $X$ can be equivalently seen as a nested polygon problem: the matrix $X$ corresponds to the slack of the $n$ vertices of a inner polygon with respect to the $m$ edges of a outer polygon, and the goal is not find a polytope (potentially of higher dimension) that lies in between the two given polygons.  
The worst-case scenario happens when the inner and outer polytopes coincide, that is, when the matrix $X$ corresponds to the slack matrix of the outer polygon (see Section~\ref{exactNMFds}).  
Hence, replacing the $n$ vertices of the inner polygon by the $m$ vertices of the outer polygon can only increase the nonnegative rank of the corresponding matrix (hence Conjecture~\ref{conjsm} would apply). 
Moreover, transposing the matrix $X$ amounts to taking the polar of the polygons which interchanges the roles of the inner and outer polygons.  Hence the nonnegative rank of a rank-3 nonnegative $X$ is smaller than the largest nonnegative rank among slack matrices of $\min(m,n)$-gons. 
We refer the reader to \cite{Shit14} for more details.  
\end{proof}

\subsection{Extension Complexity of the Correlation Polytope}

The convex hull of all $n \times n$ rank-one matrices is called the correlation polytope, 
and we denote ts slack matrix $COR(n)$.  
It was proved in \cite{FMPTdW12} that there exists a positive constant $C$ for which $\rank_+(COR(n))\geq 2^{Cn}$.
This result was improved in \cite{KW13} where it is shown that $\rank_+(COR(n))\geq {1.5}^{n}$. 

Let us define the following ${2^n \times 2^n}$ matrix, a special instance of UDISJ matrices (see Section~\ref{exactNMFds}), 
for which rows and columns are indexed by vectors $a, b \in \{0,1\}^n$ and such that 
\[
M_n(a,b) 
= \left(1-|a^Tb|\right)^2.
\]
The matrix $M_n$ is a submatrix of the slack matrix of the correlation polytope~\cite{FMPTdW12}. 
For $n=3,4,5,6$, Hybrid was not able to compute any exact NMF with $r=2^n-1$ after 1000 runs. 
This suggests the following conjecture. 
\begin{conj} \label{conjcorrelpol}
The submatrix $M_n$ of the slack matrix of the correlation polytope has full nonnegative rank, that is,
\[
\rank_+(M_n) = 2^n. 
\] 
This would imply that $\rank_+(COR(n)) \geq 2^{n}$. 
\end{conj}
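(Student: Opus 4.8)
The second sentence of Conjecture~\ref{conjcorrelpol} is unconditional and follows from monotonicity of the nonnegative rank under taking submatrices: if a nonnegative matrix $A$ is obtained from a nonnegative matrix $B$ by keeping a subset of the rows and a subset of the columns, then restricting an exact NMF $B = WH$ (with $W,H \geq 0$ and inner dimension $\rank_+(B)$) to the corresponding rows of $W$ and columns of $H$ yields an exact nonnegative factorization of $A$ with the same inner dimension, so $\rank_+(A) \leq \rank_+(B)$; the same holds if $A$ is a submatrix of $B$ only after scaling rows and columns by positive numbers, since such scalings do not change the nonnegative rank. The excerpt recalls, citing \cite{FMPTdW12}, that $M_n$ appears (in this scaled sense) as a submatrix of the slack matrix $COR(n)$, hence $\rank_+(COR(n)) \geq \rank_+(M_n)$, and the value $2^n$ propagates once the first part of the conjecture is known.

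\textbf{For the main statement I would attack the lower bound $\rank_+(M_n) \geq 2^n$.} The matching upper bound $\rank_+(M_n) \leq 2^n$ is free because $M_n$ has $2^n$ rows, so the content is that $M_n$ has \emph{full} nonnegative rank despite having ordinary rank only polynomial in $n$ (one checks $M_n(a,b) = 1 - 2\,a^Tb + (a^Tb)^2$, a quadratic form in the entries, so that $\rank(M_n) = O(n^2)$). The first attempt would be the rectangle covering bound: look for a fooling set, i.e.\ a large family of pairs $(a,b)$ with $M_n(a,b) > 0$ (equivalently $a^Tb \neq 1$) such that no all-positive combinatorial rectangle contains two of them. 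However, the support pattern of $M_n$ is exactly that of a unique-disjointness matrix, and it is precisely the point of \cite{FMPTdW12,KW13} that for such matrices the rectangle covering bound lies far below $2^n$, so this route cannot by itself prove the conjecture. The realistic plan is therefore to use, and sharpen to the exact constant, the stronger arguments of \cite{FMPTdW12} (hyperplane separation / counting) and \cite{KW13} (common-information / information-theoretic lower bounds) applied to nonnegative combinations of the rank-one summands, ideally combined with an induction on $n$ that exploits the rigid structure of the quadratic form $(1-|a^Tb|)^2$ (for instance by analysing the forced supports of rank-one factors on the rows and columns indexed by $0$, by the $e_i$, and by the $e_i+e_j$).

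\textbf{The genuine obstacle is the exact constant.} Every lower-bound technique currently available for correlation-polytope-type matrices --- rectangle cover, fooling sets, hyperplane separation, common information --- is lossy: these methods yield $2^{\Omega(n)}$, and \cite{KW13} improves the base to $1.5$, but none of them is known to match the trivial upper bound $2^n$. Closing this gap would require either a loss-free combinatorial argument specific to $M_n$, or a new general sufficient condition for a square nonnegative matrix to have full nonnegative rank. I expect this to be the hard step, and it is presumably the reason the statement is presented as a conjecture supported by the numerical evidence from the Hybrid heuristic rather than as a theorem.
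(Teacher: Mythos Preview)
Your assessment is correct and matches the paper's treatment exactly: the paper does \emph{not} prove Conjecture~\ref{conjcorrelpol}. It is stated purely as a conjecture, motivated only by the numerical evidence that the Hybrid heuristic failed to find any rank-$(2^n-1)$ exact NMF of $M_n$ for $n=3,4,5,6$ after 1000 runs. There is no attempted proof of the main equality $\rank_+(M_n)=2^n$ in the paper, and your analysis of why such a proof is out of reach --- the known lower-bound techniques (rectangle cover, hyperplane separation, common information) are all lossy and yield only $2^{\Omega(n)}$ or $1.5^n$ --- is exactly the state of affairs.

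Regarding the implication $\rank_+(COR(n)) \geq 2^n$: the paper also leaves this implicit, but your justification via monotonicity of the nonnegative rank under taking submatrices is the right one and is the standard argument behind the citation of \cite{FMPTdW12}. One small point of phrasing: calling the second sentence ``unconditional'' is slightly misleading, since the conclusion $\rank_+(COR(n)) \geq 2^n$ does depend on the conjectured equality; what is unconditional is the inequality $\rank_+(COR(n)) \geq \rank_+(M_n)$, which you then correctly combine with the conjectured value. Your remark that $\rank(M_n)=O(n^2)$ is also consistent with the paper, which notes $\rank(M_n)=\tfrac{n(n+1)}{2}+1$ for small $n$.
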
 
(Note that the rank of $M_n$ is equal to $\frac{n(n+1)}{2} + 1$ for $n \leq 11$. For higher $n$, the matrix is too large to fit in memory.)

\section{Conclusion and Further Research}

We have proposed two new heuristics along with a hybridization for exact nonnegative matrix factorization, and demonstrated that they outperform simpler multi-start strategies when benchmarked on a variety of nonnegative matrices relevant for applications. On the way we proposed a novel efficient initialization strategy, and observed that HALS and A-HALS were suitable as local NMF algorithms when performing exact NMF. \\

Future research includes the development of new and more efficient heuristics. 
Also, heuristics can be sensitive to their parameters, especially for matrices for which it is difficult to compute an exact NMF.  
%For example, Table~\ref{table-LEDM32-rbr} reports the results for RBR on matrix LEDM32 for different values of $K$ and $N$: for smaller values of $K$ and $N$, RBR is able to compute several exact NMF's. \input{tables/file-table-LEDM32-rbr} 
Hence potential future work also includes fine-tuning the parameters depending on the problem at hand (size of the matrix, difficulty of the corresponding NMF problem, etc.). 

The heuristics presented here can readily be applied to find good local minimum for the approximate NMF problem (that is, to compute $WH \approx  X$), which is particularly useful for real-world applications such as document classification and hyperspectral unmixing. 
Therefore, it would be an interesting direction for further research to fine-tune and compare heuristics in this context. 

So far, we have tested our algorithms on a limited number of nonnegative matrices. 
It would be good in the future to have a larger library of nonnegative matrices at our disposal, in order to better understand the behavior of the heuristics. With that goal in mind, we will keep our library updated on~\url{https://sites.google.com/site/exactnmf} and welcome submission of nonnegative matrices, especially those for which computing an exact factorization is still a challenge. 
%Hence, it is encouraged to send us nonnegative matrices, so that we can enrich the library available on~\url{https://sites.google.com/site/exactnmf}. 

Finally, it is important to recall that, strictly speaking, factorizations presented in this paper were not exact, because they were only computed up to machine precision; see Remark~\ref{trullyexact}. It would therefore also be useful to develop some rounding strategies to transform a high accuracy solution (e.g., $10^{-16}$ precision) into an exact NMF, when possible. 
(This was for example done manually for the example of Section~\ref{krone} where $\rank_+(A) = 4$ and $\rank_+(A \otimes A)= 12$.)

\bibliographystyle{spmpsci}
\bibliography{Biography}

\appendix

\section{Sensitivity to the Parameters $\alpha$ and $\Delta t$}  \label{appA} 

In this section, we show some numerical results to stress out that the heuristics are not too sensitive (in terms of number of exact NMF's found) to the parameters $\alpha$ and $\Delta t$ of the local search heuristic (Algorithm~\ref{alg:iterations}), 
as long as they are chosen sufficiently large; see Tables~\ref{alphadetlat} and \ref{choixdetlat}. 
This is the reason why we selected the rather conservative values of $\alpha = 0.99$ and $\Delta t = 1$ in this paper.  
\begin{table}[h]
\footnotesize 
\begin{center}
\caption{Comparison of different values of $\alpha$ with $\Delta t=1$ combined with multi-start~2.}
\begin{tabular}{|c||c|c|c|c|}
\hline
 & $\alpha=0.9999$ & $\alpha=0.99$ & $\alpha=0.9$ & $\alpha=0.5$ \\
\hline
LEDM6 & \textbf{8/10} (2.6) & 7/10 (3.2) & \textbf{8/10} (2.8) & \textbf{8/10} (\underline{2.1}) \\
LEDM8 & \textbf{5/30} (28.8) & \textbf{6/40} (20) & \textbf{5/30} (\underline{17.1}) & \textbf{5/30} (\underline{16.1}) \\
LEDM12 & 0/100 ($\sim$) & 0/100 ($\sim$) & 0/100 ($\sim$) & 0/100 ($\sim$) \\
LEDM16 & 0/100 ($\sim$) & 0/100 ($\sim$) & 0/100 ($\sim$) & 0/100 ($\sim$) \\
LEDM32 & 0/100 ($\sim$) & 0/100 ($\sim$) & 0/100 ($\sim$) & 0/100 ($\sim$) \\
6-G & \textbf{10/10} (\underline{1.7}) & \textbf{10/10} (\underline{1.8}) & \textbf{10/10} (2.1) & \textbf{10/10} (2) \\
7-G & \textbf{10/10} (\underline{1.8}) & \textbf{10/10} (2.2) & \textbf{10/10} (2.2) & \textbf{10/10} (2.1) \\
8-G & \textbf{9/10} (\underline{2.4}) & \textbf{9/10} (\underline{2.3}) & \textbf{9/10} (\underline{2.3}) & \textbf{9/10} (\underline{2.3}) \\
9-G & 5/10 (4.7) & \textbf{6/10} (\underline{4.2}) & 5/10 (4.8) & 5/10 (4.7) \\
16-G & 0/100 ($\sim$) & 0/100 ($\sim$) & 0/100 ($\sim$) & 0/100 ($\sim$) \\
32-G & 0/100 ($\sim$) & 0/100 ($\sim$) & 0/100 ($\sim$) & 0/100 ($\sim$) \\
20-D & 0/100 ($\sim$) & 0/100 ($\sim$) & 0/100 ($\sim$) & 0/100 ($\sim$) \\
24-C & 0/100 ($\sim$) & 0/100 ($\sim$) & 0/100 ($\sim$) & 0/100 ($\sim$) \\
UDISJ4 & \textbf{10/10} (2.4) & \textbf{10/10} (\underline{2.1}) & \textbf{10/10} (2.4) & \textbf{10/10} (2.4) \\
UDISJ5 & \textbf{6/20} (\underline{23.1}) & 5/30 (36.4) & 6/40 (40.6) & 3/100 (179.3) \\
UDISJ6 & 0/100 ($\sim$) & 0/100 ($\sim$) & 0/100 ($\sim$) & 0/100 ($\sim$) \\
RND1 & \textbf{10/10} (\underline{2.2}) & \textbf{10/10} (\underline{2.3}) & \textbf{10/10} (2.6) & \textbf{10/10} (2.5) \\
RND3 & \textbf{10/10} (2.7) & \textbf{10/10} (\underline{2.4}) & \textbf{10/10} (\underline{2.2}) & \textbf{10/10} (2.7) \\
\hline
\end{tabular}
\label{alphadetlat}
\end{center}
\end{table} 
\begin{table}[h]
\footnotesize 
\begin{center}
\caption{Comparison of different values of $\Delta t$ with $\alpha = 0.99$ combined with multi-start~2.}
\begin{tabular}{|c||c|c|c|c|c|c|}
\hline
 & $\Delta t=0.001$ & $\Delta t=0.01$ & $\Delta t=0.05$ & $\Delta t=0.1$ & $\Delta t=1$ & $\Delta t=2$ \\
\hline
LEDM6 & \textbf{8/10} (\underline{1.5}) & 7/10 (1.7) & \textbf{8/10} (\underline{1.5}) & \textbf{8/10} (\underline{1.5}) & 7/10 (3.2) & \textbf{8/10} (4.4) \\
LEDM8 & 5/50 (12.6) & 5/50 (13) & \textbf{5/30} (\underline{8.6}) & \textbf{5/30} (9.8) & \textbf{6/40} (20) & \textbf{5/30} (33.4) \\
LEDM12 & 0/100 ($\sim$) & 0/100 ($\sim$) & 0/100 ($\sim$) & 0/100 ($\sim$) & 0/100 ($\sim$) & 0/100 ($\sim$) \\
LEDM16 & 0/100 ($\sim$) & 0/100 ($\sim$) & 0/100 ($\sim$) & 0/100 ($\sim$) & 0/100 ($\sim$) & 0/100 ($\sim$) \\
LEDM32 & 0/100 ($\sim$) & 0/100 ($\sim$) & 0/100 ($\sim$) & 0/100 ($\sim$) & 0/100 ($\sim$) & 0/100 ($\sim$) \\
6-G & \textbf{10/10} (\underline{1.2}) & \textbf{10/10} (\underline{1.1}) & \textbf{10/10} (\underline{1.1}) & \textbf{10/10} (\underline{1.2}) & \textbf{10/10} (1.8) & \textbf{10/10} (3.1) \\
7-G & \textbf{10/10} (\underline{1.2}) & \textbf{10/10} (\underline{1.2}) & \textbf{10/10} (\underline{1.2}) & \textbf{10/10} (\underline{1.3}) & \textbf{10/10} (2.2) & \textbf{10/10} (3.1) \\
8-G & \textbf{9/10} (\underline{1.4}) & \textbf{9/10} (\underline{1.4}) & \textbf{9/10} (\underline{1.4}) & \textbf{9/10} (\underline{1.4}) & \textbf{9/10} (2.3) & \textbf{9/10} (3.5) \\
9-G & \textbf{6/10} (\underline{2.3}) & 5/10 (2.7) & 5/10 (2.7) & 5/10 (2.8) & \textbf{6/10} (4.2) & 5/10 (8.4) \\
16-G & 0/100 ($\sim$) & 0/100 ($\sim$) & 0/100 ($\sim$) & 0/100 ($\sim$) & 0/100 ($\sim$) & 0/100 ($\sim$) \\
32-G & 0/100 ($\sim$) & 0/100 ($\sim$) & 0/100 ($\sim$) & 0/100 ($\sim$) & 0/100 ($\sim$) & 0/100 ($\sim$) \\
20-D & 0/100 ($\sim$) & 0/100 ($\sim$) & 0/100 ($\sim$) & 0/100 ($\sim$) & 0/100 ($\sim$) & 0/100 ($\sim$) \\
24-C & 0/100 ($\sim$) & 0/100 ($\sim$) & 0/100 ($\sim$) & 0/100 ($\sim$) & 0/100 ($\sim$) & 0/100 ($\sim$) \\
UDISJ4 & \textbf{10/10} (\underline{1.5}) & \textbf{10/10} (\underline{1.5}) & \textbf{10/10} (\underline{1.5}) & \textbf{10/10} (\underline{1.6}) & \textbf{10/10} (2.1) & \textbf{10/10} (3.4) \\
UDISJ5 & 1/100 (606.2) & 1/100 (614.9) & 4/100 (153.6) & 2/100 (306.3) & 5/30 (\underline{36.4}) & \textbf{5/20} (\underline{37.5}) \\
UDISJ6 & 0/100 ($\sim$) & 0/100 ($\sim$) & 0/100 ($\sim$) & 0/100 ($\sim$) & 0/100 ($\sim$) & 0/100 ($\sim$) \\
RND1 & \textbf{10/10} (\underline{1.9}) & \textbf{10/10} (\underline{1.8}) & \textbf{10/10} (\underline{1.9}) & \textbf{10/10} (\underline{1.9}) & \textbf{10/10} (2.3) & \textbf{10/10} (3.8) \\
RND3 & \textbf{10/10} (\underline{1.9}) & \textbf{10/10} (\underline{1.8}) & \textbf{10/10} (\underline{1.9}) & \textbf{10/10} (\underline{1.9}) & \textbf{10/10} (2.4) & \textbf{10/10} (3.8) \\
\hline
\end{tabular}
\label{choixdetlat}
\end{center}
\end{table}

In practice however, it would be good to start the heuristics with smaller values for $\alpha$ and $\Delta t$ and 
increase them progressively if the heuristic fails to identify exact NMF's: for easily factorizable matrices (such as the randomly generated ones) it does not make sense to choose large parameters, while for difficult matrices choosing $\alpha$ and $\Delta t$ too small does not allow the heuristics to find exact NMF's because convergence of NMF algorithms can, in some cases, be too slow.  \\

\section{Parameters for Simulated Annealing}  \label{appB}

Table~\ref{tableinitrec} shows the performance of SA for different initialization strategies described in Section~\ref{rdninit}  
(for $T_0 = 0.1$, $T_{end} = 10^{-4}$, $J=2$, $N = 100$ and $K = 50$): it appears that SPARSE10 works on average the best hence we keep this initialization for SA. In particular, it is interesting to notice that SPARSE10 is able to compute exact NMF's of 32-G while the other initializations have much more difficulties (only SPARSE00 finds one exact NMF). 
\begin{table}[h]
\footnotesize
\begin{center}
\caption{Comparison of the different initialization strategies combined with SA.}
\begin{tabular}{|c||c|c|c|c|c|}
\hline
 & sparse 00 & sparse 10 & sparse 01 & sparse 11 & rndcube \\
\hline
LEDM6 & \textbf{10/10} (19.5) & \textbf{10/10} (\underline{17}) & \textbf{10/10} (20.4) & \textbf{10/10} (\underline{16.5}) & \textbf{10/10} (19.4) \\
LEDM8 & \textbf{10/10} (57.9) & \textbf{10/10} (\underline{44.8}) & \textbf{10/10} (59) & \textbf{9/10} (49.9) & \textbf{10/10} (63.3) \\
LEDM12 & \textbf{9/10} (\underline{26.5}) & 6/10 (30) & 11/20 (45.1) & 8/10 (\underline{28}) & 10/20 (51) \\
LEDM16 & 7/20 (125.9) & \textbf{11/20} (\underline{65.6}) & 5/10 (99.2) & 6/20 (112.8) & 6/20 (132.6) \\
LEDM32 & \textbf{5/90} (\underline{711.7}) & \textbf{3/100} (1016.9) & \textbf{1/100} (3728.4) & \textbf{2/100} (1447.5) & 0/100 ($\sim$) \\
6-G & \textbf{10/10} (\underline{1.2}) & \textbf{10/10} (\underline{1.2}) & \textbf{10/10} (\underline{1.1}) & \textbf{10/10} (1.3) & \textbf{10/10} (\underline{1.2}) \\
7-G & \textbf{10/10} (\underline{3.5}) & \textbf{10/10} (\underline{3.5}) & \textbf{9/10} (72.5) & \textbf{10/10} (\underline{3.4}) & \textbf{10/10} (\underline{3.5}) \\
8-G & \textbf{10/10} (17.2) & \textbf{10/10} (\underline{13.8}) & \textbf{10/10} (19.6) & \textbf{10/10} (15.9) & \textbf{10/10} (16.1) \\
9-G & \textbf{10/10} (22.7) & \textbf{10/10} (21.3) & \textbf{10/10} (23) & \textbf{10/10} (\underline{18}) & \textbf{10/10} (24.1) \\
16-G & 6/20 (87.6) & \textbf{8/20} (\underline{61.4}) & 7/40 (150.4) & 5/60 (287.7) & 6/40 (182.7) \\
32-G & 0/100 ($\sim$) & \textbf{3/100} (\underline{999.5}) & 0/100 ($\sim$) & 0/100 ($\sim$) & 0/100 ($\sim$) \\
20-D & \textbf{10/10} (7.5) & \textbf{10/10} (\underline{4.2}) & \textbf{10/10} (8.1) & \textbf{10/10} (10) & \textbf{10/10} (8) \\
24-C & \textbf{10/10} (4.4) & \textbf{10/10} (3.6) & \textbf{10/10} (\underline{3.1}) & \textbf{10/10} (4.5) & \textbf{10/10} (3.7) \\
UDISJ4 & \textbf{10/10} (\underline{1.2}) & \textbf{10/10} (\underline{1.2}) & \textbf{10/10} (\underline{1.1}) & \textbf{10/10} (\underline{1.1}) & \textbf{10/10} (\underline{1.1}) \\
UDISJ5 & \textbf{10/10} (2.9) & \textbf{10/10} (\underline{2.3}) & \textbf{10/10} (3) & \textbf{10/10} (2.7) & \textbf{10/10} (3.8) \\
UDISJ6 & \textbf{10/10} (\underline{8.3}) & \textbf{10/10} (\underline{8.1}) & \textbf{10/10} (52.4) & \textbf{10/10} (13.5) & \textbf{10/10} (43.9) \\
RND1 & \textbf{10/10} (\underline{1.1}) & \textbf{10/10} (\underline{1.1}) & \textbf{10/10} (\underline{1.1}) & \textbf{10/10} (\underline{1.1}) & \textbf{10/10} (\underline{1.1}) \\
RND3 & \textbf{10/10} (\underline{1.1}) & \textbf{10/10} (\underline{1.1}) & \textbf{10/10} (\underline{1.1}) & \textbf{10/10} (\underline{1.1}) & \textbf{10/10} (\underline{1.1}) \\
\hline
\end{tabular}
\label{tableinitrec}
\end{center}
\end{table} 

Table~\ref{recuitParamTend} shows the performance for different values of $T_{end}$ (for $J=2$, $N = 100$ and $K = 50$): it appears that the value $T_{end} = 10^{-4}$ for the final temperature works well. 
\begin{table}[h]
\footnotesize 
\begin{center}
\caption{Performance of Simulated Annealing for different values of $T_{end}$ ($J=2$, $N = 100$ and $K = 50$).}
\begin{tabular}{|c||c|c|c|c|c|}
\hline
 & $T_{end}=10^{-2}$ & $T_{end}=10^{-3}$ & $T_{end}=10^{-4}$ & $T_{end}=10^{-5}$ & $T_{end}=10^{-6}$ \\
\hline
LEDM6 & \textbf{10/10} (17.8) & \textbf{10/10} (19) & \textbf{10/10} (17) & \textbf{10/10} (\underline{12.3}) & \textbf{10/10} (13.6) \\
LEDM8 & \textbf{10/10} (54.5) & \textbf{10/10} (57) & \textbf{10/10} (\underline{44.8}) & \textbf{10/10} (62.3) & \textbf{9/10} (\underline{49.1}) \\
LEDM12 & 6/60 (225.4) & 6/20 (63.9) & 6/10 (\underline{30}) & 5/10 (33) & \textbf{7/10} (\underline{29.4}) \\
LEDM16 & 5/100 (488.7) & 5/50 (223) & \textbf{11/20} (\underline{65.6}) & 5/10 (84.1) & 6/20 (100.1) \\
LEDM32 & 0/100 ($\sim$) & 0/100 ($\sim$) & \textbf{3/100} (\underline{1016.9}) & 0/100 ($\sim$) & \textbf{2/100} (1561) \\
6-G & \textbf{10/10} (\underline{1.2}) & \textbf{10/10} (\underline{1.2}) & \textbf{10/10} (\underline{1.2}) & \textbf{10/10} (\underline{1.2}) & \textbf{10/10} (\underline{1.2}) \\
7-G & \textbf{10/10} (3.9) & \textbf{10/10} (\underline{3.8}) & \textbf{10/10} (\underline{3.5}) & \textbf{10/10} (3.9) & \textbf{10/10} (4.2) \\
8-G & \textbf{10/10} (16.5) & \textbf{10/10} (17.4) & \textbf{10/10} (13.8) & \textbf{10/10} (\underline{11.4}) & \textbf{10/10} (13.4) \\
9-G & \textbf{10/10} (21) & \textbf{10/10} (17.2) & \textbf{10/10} (21.3) & \textbf{10/10} (\underline{12.7}) & \textbf{10/10} (15.8) \\
16-G & 5/80 (352.4) & 6/20 (66.3) & 8/20 (61.4) & 6/30 (97.4) & \textbf{6/10} (\underline{23.7}) \\
32-G & 0/100 ($\sim$) & 0/100 ($\sim$) & \textbf{3/100} (\underline{999.5}) & \textbf{3/100} (\underline{931.3}) & \textbf{2/100} (1331.3) \\
20-D & \textbf{10/10} (\underline{4.1}) & \textbf{10/10} (5.9) & \textbf{10/10} (\underline{4.2}) & \textbf{10/10} (7.6) & \textbf{10/10} (5.2) \\
24-C & \textbf{10/10} (3.9) & \textbf{10/10} (\underline{2.1}) & \textbf{10/10} (3.6) & \textbf{10/10} (2.9) & \textbf{10/10} (2.7) \\
UDISJ4 & \textbf{10/10} (\underline{1.2}) & \textbf{10/10} (\underline{1.2}) & \textbf{10/10} (\underline{1.2}) & \textbf{10/10} (\underline{1.1}) & \textbf{10/10} (\underline{1.2}) \\
UDISJ5 & \textbf{10/10} (\underline{2.3}) & \textbf{10/10} (\underline{2.4}) & \textbf{10/10} (\underline{2.3}) & \textbf{10/10} (\underline{2.5}) & \textbf{10/10} (\underline{2.3}) \\
UDISJ6 & \textbf{10/10} (9) & \textbf{10/10} (9.4) & \textbf{10/10} (\underline{8.1}) & \textbf{10/10} (\underline{7.7}) & \textbf{10/10} (8.6) \\
RND1 & \textbf{10/10} (\underline{1.1}) & \textbf{10/10} (\underline{1.1}) & \textbf{10/10} (\underline{1.1}) & \textbf{10/10} (\underline{1.1}) & \textbf{10/10} (\underline{1.1}) \\
RND3 & \textbf{10/10} (\underline{1.1}) & \textbf{10/10} (\underline{1.1}) & \textbf{10/10} (\underline{1.1}) & \textbf{10/10} (\underline{1.1}) & \textbf{10/10} (\underline{1.1}) \\
\hline
\end{tabular}
\label{recuitParamTend}
\end{center}
\end{table}  

Table~\ref{recuitParamKN} shows the performance for different values of $N$ and $K$, for $T_{end} = 10^{-4}$ and $J = 2$. 
It seems that $K = 50$ and $N = 100$ is a good compromise between number of exact NMF's found and computational time. 
\begin{sidewaystable}[h]
\tiny
\begin{center}
\caption{Performance of Simulated Annealing for different values of $K$ and $N$ ($T_{end} = 10^{-4}$ and $J = 2$).}
\begin{tabular}{|c||c|c|c|c|c|c|c|c|c|c|c|c|}
\hline
 & \multicolumn{3}{c|}{$K=10$} & \multicolumn{3}{c|}{$K=20$} & \multicolumn{3}{c|}{$K=50$} & \multicolumn{3}{c|}{$K=100$} \\
\cline{2-13}
 & $N=10$ & $N=50$ & $N=100$ & $N=10$ & $N=50$ & $N=100$ & $N=10$ & $N=50$ & $N=100$ & $N=10$ & $N=50$ & $N=100$ \\
\hline
LEDM6 & 6/10 (3.1) & \textbf{10/10} (3.1) & \textbf{10/10} (4.9) & 8/10 (\underline{2.8}) & \textbf{10/10} (5) & \textbf{10/10} (8.7) & 8/10 (4.6) & \textbf{10/10} (11.1) & \textbf{10/10} (17) & \textbf{10/10} (5.9) & \textbf{10/10} (19) & \textbf{10/10} (38.4) \\
LEDM8 & 7/20 (53.1) & 7/10 (\underline{45}) & 8/10 (\underline{48}) & \textbf{9/10} (49) & \textbf{10/10} (\underline{48}) & \textbf{10/10} (49.8) & \textbf{9/10} (\underline{44}) & \textbf{9/10} (59.4) & \textbf{10/10} (\underline{44.8}) & \textbf{9/10} (49.4) & \textbf{10/10} (62.1) & \textbf{10/10} (71.5) \\
LEDM12 & 5/80 (47.2) & 7/20 (\underline{13.2}) & 5/30 (40.8) & 5/30 (21.6) & 5/10 (\underline{12.7}) & 6/10 (18.4) & 6/20 (15.4) & 6/10 (23) & 6/10 (30) & 5/20 (31.5) & 7/10 (33.8) & \textbf{9/10} (55.9) \\
LEDM16 & 5/70 (98.1) & 5/50 (115.4) & 5/80 (184.6) & 5/30 (76.7) & 6/30 (93.9) & 5/20 (91.2) & 7/60 (103.5) & 5/20 (100.9) & 11/20 (\underline{65.6}) & 6/40 (116.4) & \textbf{7/10} (81.2) & 6/10 (156) \\
LEDM32 & 0/100 ($\sim$) & 0/100 ($\sim$) & 0/100 ($\sim$) & 0/100 ($\sim$) & 0/100 ($\sim$) & 0/100 ($\sim$) & 0/100 ($\sim$) & \textbf{3/100} (706.4) & \textbf{3/100} (1016.9) & \textbf{1/100} (1318) & \textbf{5/70} (\underline{571.8}) & \textbf{5/70} (1049) \\
6-G & \textbf{10/10} (1.6) & \textbf{10/10} (2.5) & \textbf{10/10} (\underline{1.2}) & \textbf{10/10} (2) & \textbf{10/10} (3.4) & \textbf{10/10} (\underline{1.2}) & \textbf{10/10} (3.5) & \textbf{10/10} (2.7) & \textbf{10/10} (\underline{1.2}) & \textbf{10/10} (5.4) & \textbf{10/10} (3.9) & \textbf{10/10} (\underline{1.2}) \\
7-G & 7/10 (2.7) & \textbf{10/10} (3) & \textbf{10/10} (\underline{2.4}) & \textbf{9/10} (\underline{2.4}) & \textbf{10/10} (4.7) & \textbf{10/10} (3.8) & \textbf{10/10} (3.7) & \textbf{10/10} (9.3) & \textbf{10/10} (3.5) & \textbf{10/10} (5.9) & \textbf{10/10} (18.2) & \textbf{10/10} (4.5) \\
8-G & \textbf{10/10} (\underline{1.6}) & \textbf{10/10} (3.1) & \textbf{10/10} (4.8) & \textbf{10/10} (2.1) & \textbf{10/10} (5) & \textbf{10/10} (7.3) & \textbf{10/10} (3.8) & \textbf{10/10} (11) & \textbf{10/10} (13.8) & \textbf{10/10} (5.9) & \textbf{10/10} (21.1) & \textbf{10/10} (28) \\
9-G & 8/10 (\underline{2.3}) & 7/10 (5.1) & \textbf{10/10} (5.5) & 8/10 (3) & \textbf{10/10} (5.5) & \textbf{10/10} (9.9) & \textbf{9/10} (4.4) & \textbf{10/10} (11.8) & \textbf{10/10} (21.3) & \textbf{9/10} (7.1) & \textbf{10/10} (23.3) & \textbf{10/10} (42.4) \\
16-G & 5/30 (\underline{15.1}) & 7/60 (38.8) & 5/40 (56.5) & 5/30 (18.7) & 6/20 (21.7) & 5/20 (46) & 6/20 (16.3) & 9/20 (31.5) & 8/20 (61.4) & 10/20 (\underline{14.4}) & 9/20 (58.7) & \textbf{8/10} (63.6) \\
32-G & 1/100 (410.7) & 0/100 ($\sim$) & 1/100 (895.7) & 0/100 ($\sim$) & 2/100 (458.4) & 2/100 (740) & 2/100 (\underline{341.1}) & 3/100 (606.6) & 3/100 (999.5) & 3/100 (\underline{330.4}) & 5/60 (405.1) & \textbf{5/40} (522.4) \\
20-D & 6/10 (\underline{3.5}) & 8/10 (4.8) & \textbf{9/10} (4.9) & 5/10 (5.7) & 7/10 (9.3) & \textbf{9/10} (6.4) & \textbf{10/10} (4.4) & \textbf{10/10} (14.3) & \textbf{10/10} (4.2) & \textbf{9/10} (8.4) & \textbf{10/10} (27.7) & \textbf{10/10} (11.8) \\
24-C & 8/10 (\underline{2.5}) & \textbf{9/10} (4.9) & \textbf{10/10} (2.9) & 8/10 (3.3) & 8/10 (9.6) & \textbf{9/10} (4.4) & \textbf{10/10} (5) & \textbf{10/10} (17.1) & \textbf{10/10} (3.6) & \textbf{10/10} (8.7) & \textbf{10/10} (30.5) & \textbf{10/10} (4.2) \\
UDISJ4 & \textbf{10/10} (1.7) & \textbf{10/10} (\underline{1.3}) & \textbf{10/10} (\underline{1.2}) & \textbf{10/10} (2.4) & \textbf{10/10} (1.4) & \textbf{10/10} (\underline{1.2}) & \textbf{10/10} (4.4) & \textbf{10/10} (\underline{1.3}) & \textbf{10/10} (\underline{1.2}) & \textbf{10/10} (7.4) & \textbf{10/10} (1.4) & \textbf{10/10} (\underline{1.2}) \\
UDISJ5 & \textbf{10/10} (3.9) & \textbf{10/10} (13.7) & \textbf{10/10} (2.7) & \textbf{10/10} (6.8) & \textbf{10/10} (26.3) & \textbf{10/10} (3.5) & \textbf{10/10} (15.5) & \textbf{10/10} (55.6) & \textbf{10/10} (\underline{2.3}) & \textbf{10/10} (28.6) & \textbf{10/10} (98.7) & \textbf{10/10} (3.5) \\
UDISJ6 & \textbf{9/10} (\underline{6.9}) & \textbf{10/10} (22.9) & \textbf{10/10} (\underline{7.1}) & \textbf{10/10} (11.4) & \textbf{10/10} (42.3) & \textbf{10/10} (8.8) & \textbf{10/10} (27.4) & \textbf{10/10} (63.5) & \textbf{10/10} (8.1) & \textbf{10/10} (48.5) & \textbf{10/10} (131.5) & \textbf{10/10} (11.3) \\
RND1 & \textbf{10/10} (1.9) & \textbf{10/10} (1.3) & \textbf{10/10} (\underline{1.1}) & \textbf{10/10} (2.7) & \textbf{10/10} (1.3) & \textbf{10/10} (\underline{1.1}) & \textbf{10/10} (5.1) & \textbf{10/10} (1.3) & \textbf{10/10} (\underline{1.1}) & \textbf{10/10} (8.5) & \textbf{10/10} (1.3) & \textbf{10/10} (\underline{1.1}) \\
RND3 & \textbf{10/10} (1.9) & \textbf{10/10} (\underline{1.1}) & \textbf{10/10} (\underline{1.1}) & \textbf{10/10} (2.7) & \textbf{10/10} (\underline{1.1}) & \textbf{10/10} (\underline{1.1}) & \textbf{10/10} (5.1) & \textbf{10/10} (\underline{1.1}) & \textbf{10/10} (\underline{1.1}) & \textbf{10/10} (8.5) & \textbf{10/10} (\underline{1.1}) & \textbf{10/10} (\underline{1.1}) \\
\hline
\end{tabular}
\label{recuitParamKN}
\end{center}
\end{sidewaystable} 

Table~\ref{recuitParamJ} shows the performance for different values of $J$ (for $T_{end} = 10^{-4}$, $K = 50$ and $N = 100$), and shows that $J = 2$ performs the best. 
\begin{table}[h]
\footnotesize 
\begin{center}
\caption{Performance of Simulated Annealing for different values of $J$ ($T_{end} = 10^{-4}$, $K = 50$ and $N = 100$).}
\begin{tabular}{|c||c|c|c|c|}
\hline
 & $|J|=1$ & $|J|=2$ & $|J|=3$ & $|J|=4$ \\
\hline
LEDM6 & \textbf{10/10} (20.5) & \textbf{10/10} (\underline{17}) & \textbf{10/10} (20.3) & \textbf{10/10} (19.4) \\
LEDM8 & \textbf{10/10} (54.4) & \textbf{10/10} (\underline{44.8}) & \textbf{10/10} (64.4) & \textbf{10/10} (61.3) \\
LEDM12 & \textbf{10/10} (\underline{25}) & 6/10 (30) & 5/10 (50.3) & 7/20 (67.7) \\
LEDM16 & 5/10 (100.9) & \textbf{11/20} (\underline{65.6}) & 7/20 (115.1) & 6/20 (99.4) \\
LEDM32 & \textbf{1/100} (3655.9) & \textbf{3/100} (\underline{1016.9}) & \textbf{1/100} (3688) & \textbf{2/100} (1798) \\
6-G & \textbf{10/10} (\underline{1.2}) & \textbf{10/10} (\underline{1.2}) & \textbf{10/10} (\underline{1.2}) & \textbf{10/10} (\underline{1.2}) \\
7-G & \textbf{10/10} (\underline{2.2}) & \textbf{10/10} (3.5) & \textbf{10/10} (5.1) & \textbf{10/10} (10.4) \\
8-G & \textbf{10/10} (17.5) & \textbf{10/10} (\underline{13.8}) & \textbf{10/10} (16.5) & \textbf{10/10} (20.4) \\
9-G & \textbf{10/10} (\underline{19.5}) & \textbf{10/10} (\underline{21.3}) & \textbf{10/10} (22.8) & \textbf{10/10} (23.5) \\
16-G & \textbf{6/10} (\underline{44.5}) & 8/20 (61.4) & 5/20 (108.2) & 6/60 (268.2) \\
32-G & \textbf{5/90} (\underline{613.1}) & \textbf{3/100} (999.5) & 0/100 ($\sim$) & \textbf{1/100} (3377) \\
20-D & \textbf{9/10} (8) & \textbf{10/10} (\underline{4.2}) & \textbf{10/10} (8.6) & \textbf{10/10} (19.8) \\
24-C & \textbf{10/10} (4) & \textbf{10/10} (\underline{3.6}) & \textbf{10/10} (4.8) & \textbf{10/10} (6.5) \\
UDISJ4 & \textbf{10/10} (\underline{1.3}) & \textbf{10/10} (\underline{1.2}) & \textbf{10/10} (\underline{1.2}) & \textbf{10/10} (\underline{1.2}) \\
UDISJ5 & \textbf{10/10} (3.7) & \textbf{10/10} (\underline{2.3}) & \textbf{10/10} (2.8) & \textbf{10/10} (3) \\
UDISJ6 & \textbf{10/10} (11) & \textbf{10/10} (8.1) & \textbf{10/10} (\underline{6.8}) & \textbf{10/10} (8.1) \\
RND1 & \textbf{10/10} (\underline{1.1}) & \textbf{10/10} (\underline{1.1}) & \textbf{10/10} (\underline{1.2}) & \textbf{10/10} (\underline{1.1}) \\
RND3 & \textbf{10/10} (\underline{1.1}) & \textbf{10/10} (\underline{1.1}) & \textbf{10/10} (\underline{1.1}) & \textbf{10/10} (\underline{1.1}) \\
\hline
\end{tabular}
\label{recuitParamJ}
\end{center}
\end{table}

\section{Parameters for the Rank-by-Rank Heuristic}  \label{appC}

Table~\ref{tableinitrbr} shows the performance of RBR for the different initialization strategies   
(for $N = 100$ and $K = 50$): SPARSE10 works on average the best. As for SA, it allows to compute exact NMF's of 32-G (6/10) while all other initializations fail. 
\begin{table}[h]
\footnotesize
\begin{center}
\caption{Comparison of the different initialization strategies combined with RBR.}
\begin{tabular}{|c||c|c|c|c|c|}
\hline
 & sparse 00 & sparse 10 & sparse 01 & sparse 11 & rndcube \\
\hline
LEDM6 & \textbf{10/10} (\underline{1.4}) & \textbf{10/10} (\underline{1.4}) & \textbf{10/10} (\underline{1.4}) & \textbf{10/10} (\underline{1.4}) & \textbf{10/10} (\underline{1.4}) \\
LEDM8 & \textbf{10/10} (14.6) & \textbf{10/10} (15.8) & \textbf{10/10} (\underline{12.3}) & \textbf{10/10} (20.3) & \textbf{10/10} (\underline{11.7}) \\
LEDM12 & 5/30 (14.8) & \textbf{7/30} (10.1) & \textbf{6/20} (\underline{7.8}) & \textbf{7/30} (10.3) & \textbf{5/20} (9.4) \\
LEDM16 & 5/50 (40.4) & 5/30 (29.5) & 6/40 (31.5) & 5/30 (29.5) & \textbf{6/10} (\underline{17.9}) \\
LEDM32 & 0/100 ($\sim$) & 0/100 ($\sim$) & 0/100 ($\sim$) & 0/100 ($\sim$) & 0/100 ($\sim$) \\
6-G & \textbf{10/10} (\underline{1.4}) & \textbf{10/10} (\underline{1.4}) & \textbf{10/10} (\underline{1.4}) & \textbf{10/10} (\underline{1.4}) & \textbf{10/10} (\underline{1.4}) \\
7-G & \textbf{10/10} (\underline{1.5}) & \textbf{10/10} (\underline{1.5}) & \textbf{10/10} (\underline{1.5}) & \textbf{10/10} (\underline{1.5}) & \textbf{10/10} (\underline{1.5}) \\
8-G & \textbf{10/10} (\underline{1.5}) & \textbf{10/10} (\underline{1.5}) & \textbf{10/10} (\underline{1.5}) & \textbf{10/10} (\underline{1.5}) & \textbf{10/10} (\underline{1.5}) \\
9-G & \textbf{10/10} (\underline{1.6}) & \textbf{10/10} (\underline{1.6}) & \textbf{10/10} (\underline{1.6}) & \textbf{10/10} (\underline{1.6}) & \textbf{10/10} (\underline{1.6}) \\
16-G & \textbf{10/10} (\underline{1.8}) & \textbf{10/10} (\underline{1.8}) & \textbf{9/10} (2.1) & 5/10 (4.3) & 6/20 (8.1) \\
32-G & \textbf{7/20} (8.4) & \textbf{8/20} (\underline{7.1}) & 5/40 (26.7) & \textbf{7/20} (9.4) & 1/100 (421.1) \\
20-D & 8/10 (2.5) & \textbf{9/10} (2.1) & \textbf{10/10} (\underline{1.9}) & \textbf{9/10} (2.2) & 8/10 (2.3) \\
24-C & 7/10 (3.4) & \textbf{8/10} (\underline{2.9}) & \textbf{8/10} (\underline{2.9}) & \textbf{8/10} (\underline{3}) & 12/20 (4.1) \\
UDISJ4 & \textbf{10/10} (\underline{1.9}) & \textbf{10/10} (\underline{1.9}) & \textbf{10/10} (\underline{1.9}) & \textbf{10/10} (\underline{1.9}) & 8/10 (2.3) \\
UDISJ5 & \textbf{10/10} (\underline{5}) & \textbf{10/10} (\underline{4.8}) & \textbf{10/10} (\underline{4.9}) & \textbf{10/10} (\underline{4.9}) & \textbf{9/10} (5.5) \\
UDISJ6 & 6/20 (57.5) & 6/40 (116.8) & 7/10 (23.7) & \textbf{8/10} (\underline{21}) & 5/30 (106) \\
RND1 & \textbf{10/10} (\underline{2.2}) & \textbf{10/10} (\underline{2.2}) & \textbf{10/10} (\underline{2.2}) & \textbf{10/10} (\underline{2.2}) & \textbf{10/10} (\underline{2.2}) \\
RND3 & \textbf{10/10} (\underline{2.2}) & \textbf{10/10} (\underline{2.2}) & \textbf{10/10} (\underline{2.2}) & \textbf{10/10} (\underline{2.2}) & \textbf{10/10} (\underline{2.2}) \\
\hline
\end{tabular}
\label{tableinitrbr}
\end{center}
\end{table}

Table~\ref{rbrParamKN} gives the results for several values of the parameters $K$ and $N$. It is interesting to observe that when $K$ gets larger, the heuristic performs rather poorly in some cases (e.g., for the UDISJ6 matrix). The reason is that when $K$ increases, the heuristic tends to generate similar solutions: the ones obtained with Algorithm~\ref{alg:grpo} initialized with the best solution that can be obtained by combining the rank-$(k-1)$ solution with a rank-one one. In other words, the search domain that can be explored by RBR is reduced when $K$ increases. 

\begin{sidewaystable}[h]
\tiny
\begin{center}
\caption{Performance of the Rank-by-Rank heuristic for different values of $K$ and $N$.}
\begin{tabular}{|c||c|c|c|c|c|c|c|c|c|c|c|c|}
\hline
 & \multicolumn{3}{c|}{$K=1$} & \multicolumn{3}{c|}{$K=10$} & \multicolumn{3}{c|}{$K=50$} & \multicolumn{3}{c|}{$K=100$} \\
\cline{2-13}
 & $N=10$ & $N=50$ & $N=100$ & $N=10$ & $N=50$ & $N=100$ & $N=10$ & $N=50$ & $N=100$ & $N=10$ & $N=50$ & $N=100$ \\
\hline
LEDM6 & 8/30 (6.7) & 7/10 (2) & 5/10 (3.2) & \textbf{10/10} (\underline{1.2}) & \textbf{10/10} (1.4) & \textbf{10/10} (1.7) & \textbf{10/10} (1.4) & \textbf{10/10} (2.7) & \textbf{10/10} (4.1) & \textbf{10/10} (1.9) & \textbf{10/10} (4.4) & \textbf{10/10} (7.7) \\
LEDM8 & 6/10 (19.4) & 8/20 (13.6) & 6/10 (\underline{9.8}) & \textbf{10/10} (38.5) & \textbf{10/10} (15.8) & \textbf{10/10} (13.1) & \textbf{10/10} (20.8) & \textbf{10/10} (16) & \textbf{10/10} (28.8) & \textbf{10/10} (19.2) & \textbf{10/10} (16.5) & \textbf{10/10} (34.7) \\
LEDM12 & 5/50 (27.8) & 5/20 (10.2) & 7/20 (6.2) & 5/80 (33.8) & 7/30 (10.1) & \textbf{10/10} (\underline{2.2}) & 6/50 (21.2) & 9/20 (10.1) & \textbf{10/10} (6.2) & 6/40 (22) & 6/10 (11.7) & \textbf{10/10} (12.3) \\
LEDM16 & 0/100 ($\sim$) & 5/60 (59.1) & 5/40 (37.2) & 5/100 (73.8) & 5/30 (\underline{29.5}) & 6/40 (36.4) & 5/50 (44.2) & 7/50 (53.2) & 5/50 (98.7) & \textbf{6/20} (\underline{27.1}) & 5/70 (139.3) & 6/70 (197.6) \\
LEDM32 & \textbf{1/100} (\underline{545.3}) & 0/100 ($\sim$) & 0/100 ($\sim$) & 0/100 ($\sim$) & 0/100 ($\sim$) & 0/100 ($\sim$) & 0/100 ($\sim$) & 0/100 ($\sim$) & 0/100 ($\sim$) & 0/100 ($\sim$) & 0/100 ($\sim$) & 0/100 ($\sim$) \\
6-G & 5/10 (3.1) & 5/10 (2.8) & 6/20 (4) & \textbf{10/10} (\underline{1.2}) & \textbf{10/10} (1.4) & \textbf{10/10} (1.7) & \textbf{10/10} (1.4) & \textbf{10/10} (2.7) & \textbf{10/10} (4.1) & \textbf{10/10} (1.9) & \textbf{10/10} (4.3) & \textbf{10/10} (7.6) \\
7-G & 12/20 (2.4) & 7/10 (2) & 6/10 (2.1) & \textbf{10/10} (\underline{1.2}) & \textbf{10/10} (1.5) & \textbf{10/10} (1.9) & \textbf{10/10} (1.6) & \textbf{10/10} (3.2) & \textbf{10/10} (5) & \textbf{10/10} (2.1) & \textbf{10/10} (5.4) & \textbf{10/10} (9.6) \\
8-G & \textbf{10/10} (\underline{1.1}) & \textbf{10/10} (\underline{1.1}) & \textbf{10/10} (\underline{1.2}) & \textbf{10/10} (\underline{1.2}) & \textbf{10/10} (1.5) & \textbf{10/10} (1.9) & \textbf{10/10} (1.6) & \textbf{10/10} (3.2) & \textbf{10/10} (5.1) & \textbf{10/10} (2.1) & \textbf{10/10} (5.4) & \textbf{10/10} (9.7) \\
9-G & 5/10 (3.3) & 11/20 (2.8) & 6/10 (2.6) & \textbf{10/10} (\underline{1.2}) & \textbf{10/10} (1.6) & \textbf{10/10} (2.2) & \textbf{10/10} (1.7) & \textbf{10/10} (3.8) & \textbf{10/10} (6.2) & \textbf{10/10} (2.4) & \textbf{10/10} (6.6) & \textbf{10/10} (12) \\
16-G & 5/70 (29.7) & 6/30 (10.2) & 5/50 (24.3) & 8/10 (\underline{1.8}) & \textbf{10/10} (\underline{1.8}) & \textbf{10/10} (2.4) & \textbf{10/10} (\underline{1.9}) & \textbf{10/10} (4.5) & \textbf{10/10} (7.5) & \textbf{10/10} (2.7) & \textbf{10/10} (8) & \textbf{10/10} (14.9) \\
32-G & 1/100 (316.6) & 2/100 (153.4) & 5/100 (61.2) & 7/30 (9.8) & 8/20 (\underline{7.1}) & 11/20 (\underline{6.8}) & 5/20 (14.4) & 7/10 (9.3) & \textbf{10/10} (10.8) & 6/50 (39.4) & \textbf{9/10} (12.8) & \textbf{10/10} (21.7) \\
20-D & 7/20 (4.9) & 5/30 (11.5) & 8/30 (6.4) & \textbf{10/10} (\underline{1.3}) & \textbf{9/10} (2.1) & 8/10 (3.4) & \textbf{10/10} (2) & 8/10 (6.4) & \textbf{9/10} (9.7) & \textbf{10/10} (3) & \textbf{9/10} (10.3) & 5/10 (34.7) \\
24-C & 5/70 (28.1) & 7/20 (3.9) & 5/50 (15.1) & 6/10 (\underline{2.9}) & 8/10 (\underline{2.9}) & 8/10 (4.6) & 9/20 (6.9) & \textbf{10/10} (7.6) & \textbf{9/10} (14.7) & 7/20 (13.5) & \textbf{10/10} (13.8) & \textbf{9/10} (29.8) \\
UDISJ4 & \textbf{9/10} (\underline{1.3}) & \textbf{9/10} (\underline{1.3}) & \textbf{9/10} (\underline{1.4}) & \textbf{10/10} (\underline{1.3}) & \textbf{10/10} (1.9) & \textbf{9/10} (3) & \textbf{10/10} (2) & \textbf{10/10} (5.1) & 8/10 (10.9) & \textbf{10/10} (3) & \textbf{10/10} (9) & 7/40 (98.6) \\
UDISJ5 & 8/10 (\underline{1.8}) & \textbf{9/10} (\underline{1.7}) & 6/10 (3.2) & \textbf{10/10} (2.1) & \textbf{10/10} (4.8) & \textbf{10/10} (8.4) & 8/10 (7) & \textbf{10/10} (19.9) & \textbf{10/10} (35.8) & 5/10 (20.4) & \textbf{10/10} (38.2) & \textbf{10/10} (74.8) \\
UDISJ6 & \textbf{7/10} (\underline{2.6}) & \textbf{7/10} (4) & \textbf{13/20} (6.9) & 7/20 (15.1) & 6/40 (116.8) & 7/30 (149.7) & 5/40 (156.2) & 0/100 ($\sim$) & 0/100 ($\sim$) & 5/40 (293.2) & 0/100 ($\sim$) & 0/100 ($\sim$) \\
RND1 & \textbf{10/10} (\underline{1.1}) & \textbf{10/10} (\underline{1.2}) & \textbf{10/10} (1.3) & \textbf{10/10} (1.3) & \textbf{10/10} (2.2) & \textbf{10/10} (3.1) & \textbf{10/10} (2.4) & \textbf{10/10} (6.4) & \textbf{10/10} (12.2) & \textbf{10/10} (3.7) & \textbf{10/10} (12.4) & \textbf{10/10} (22.4) \\
RND3 & \textbf{10/10} (\underline{1.1}) & \textbf{10/10} (\underline{1.2}) & \textbf{10/10} (1.3) & \textbf{10/10} (1.3) & \textbf{10/10} (2.2) & \textbf{10/10} (3.1) & \textbf{10/10} (2.4) & \textbf{10/10} (6.4) & \textbf{10/10} (12.3) & \textbf{10/10} (3.7) & \textbf{10/10} (12.4) & \textbf{10/10} (22.3) \\
\hline
\end{tabular}
\label{rbrParamKN}
\end{center}
\end{sidewaystable} 

\section{Initialization for the Hybridization}  \label{appD} 

Again the best initialization strategy is SPARSE10. However, it is interesting to note that Hybrid is less sensitive to initialization than SA and RBR. 
In fact, except for 32-G with RNDCUBE and LEDM32 with SPARSE01, it was able to compute exact NMF's in all situations.
In other words, Hybrid is a more robust strategy than RBR and SA although it is computationally more expensive on average. 
\begin{table}[h]
\footnotesize
\begin{center}
\caption{Comparison of the different initialization strategies combined with the hybridization between RBR and SA.}
\begin{tabular}{|c||c|c|c|c|c|}
\hline
 & sparse 00 & sparse 10 & sparse 01 & sparse 11 & rndcube \\
\hline
LEDM6 & \textbf{10/10} (\underline{20.1}) & \textbf{10/10} (\underline{20.4}) & \textbf{10/10} (\underline{20.2}) & \textbf{10/10} (\underline{20}) & \textbf{10/10} (\underline{20.1}) \\
LEDM8 & \textbf{10/10} (59.7) & \textbf{10/10} (59.2) & \textbf{10/10} (\underline{53}) & \textbf{10/10} (65.9) & \textbf{10/10} (61.6) \\
LEDM12 & 7/10 (36) & 5/10 (50.8) & 5/10 (51) & 7/10 (36.7) & \textbf{8/10} (\underline{31.2}) \\
LEDM16 & 5/10 (102.6) & 8/20 (103.1) & 11/20 (91.3) & 5/10 (\underline{69.8}) & \textbf{7/10} (\underline{74.5}) \\
LEDM32 & \textbf{4/100} (\underline{946.2}) & \textbf{2/100} (1851.1) & \textbf{1/100} (3796.3) & 0/100 ($\sim$) & 0/100 ($\sim$) \\
6-G & \textbf{10/10} (\underline{1.5}) & \textbf{10/10} (\underline{1.4}) & \textbf{10/10} (\underline{1.5}) & \textbf{10/10} (\underline{1.5}) & \textbf{10/10} (\underline{1.4}) \\
7-G & \textbf{10/10} (4.5) & \textbf{10/10} (3.1) & \textbf{10/10} (\underline{2.5}) & \textbf{10/10} (3.5) & \textbf{10/10} (3.1) \\
8-G & \textbf{10/10} (\underline{14.7}) & \textbf{10/10} (\underline{13.4}) & \textbf{10/10} (19.4) & \textbf{10/10} (20.2) & \textbf{10/10} (19.2) \\
9-G & \textbf{10/10} (\underline{22.9}) & \textbf{10/10} (\underline{22}) & \textbf{10/10} (\underline{23.8}) & \textbf{10/10} (\underline{24.1}) & \textbf{10/10} (\underline{23.9}) \\
16-G & \textbf{10/10} (\underline{26.4}) & 7/10 (36.7) & 8/10 (34.6) & 6/10 (45.6) & 5/20 (109.3) \\
32-G & \textbf{5/10} (\underline{67.4}) & \textbf{5/10} (\underline{66.9}) & 6/30 (176.6) & 6/40 (235.9) & 0/100 ($\sim$) \\
20-D & \textbf{10/10} (10.1) & \textbf{10/10} (\underline{4.4}) & \textbf{10/10} (10.3) & \textbf{10/10} (6.7) & \textbf{10/10} (5.7) \\
24-C & \textbf{10/10} (\underline{2.7}) & \textbf{10/10} (5.6) & \textbf{10/10} (3.1) & \textbf{10/10} (4.1) & \textbf{10/10} (\underline{2.9}) \\
UDISJ4 & \textbf{10/10} (\underline{1.9}) & \textbf{10/10} (\underline{1.9}) & \textbf{10/10} (\underline{1.9}) & \textbf{10/10} (\underline{1.9}) & \textbf{10/10} (\underline{1.9}) \\
UDISJ5 & \textbf{10/10} (\underline{5.1}) & \textbf{10/10} (\underline{5}) & \textbf{10/10} (\underline{5.3}) & \textbf{10/10} (\underline{5.3}) & \textbf{10/10} (5.8) \\
UDISJ6 & \textbf{10/10} (\underline{18.6}) & \textbf{10/10} (21.2) & \textbf{10/10} (19.3) & \textbf{10/10} (\underline{17.2}) & \textbf{10/10} (21.4) \\
RND1 & \textbf{10/10} (\underline{2.2}) & \textbf{10/10} (\underline{2.2}) & \textbf{10/10} (\underline{2.3}) & \textbf{10/10} (\underline{2.2}) & \textbf{10/10} (\underline{2.2}) \\
RND3 & \textbf{10/10} (\underline{2.2}) & \textbf{10/10} (\underline{2.2}) & \textbf{10/10} (\underline{2.2}) & \textbf{10/10} (\underline{2.2}) & \textbf{10/10} (\underline{2.2}) \\
\hline
\end{tabular}
\label{tableinithyb}
\end{center}
\end{table}

\end{document}